\title{On checking $\mathbf{L}^{\boldsymbol{p}}$-admissibility\\
        for parabolic control systems}
\date{April 9, 2024}
\author{\stepcounter{footnote}
        Philip~Preußler\thanks{Corresponding author} \thanks{Department of Applied Mathematics, 
                University of Twente, 
                P.O.~Box 217, 7500~AE Enschede, 
                The Netherlands, 
                \texttt{p.n.preusler@utwente.nl} }
        \and Felix~L.~Schwenninger\thanks{Department of Applied Mathematics,
                University of Twente, 
                P.O.~Box 217, 
                7500~AE Enschede, 
                The Netherlands, 
                \texttt{f.l.schwenninger@utwente.nl}}
       }
\renewrobustcmd*{\bibinitdelim}{\,}
\theoremstyle{definition}
\newtheorem{definition}{Definition}[section]
\newtheorem{example}[definition]{Example}
\newtheorem{remark}[definition]{Remark}
\theoremstyle{plain}
\newtheorem{theorem}[definition]{Theorem}
\newtheorem{lemma}[definition]{Lemma}
\newtheorem{proposition}[definition]{Proposition}
\newcommand{\C}{\mathbb{C}}
\newcommand{\R}{\mathbb{R}}
\newcommand{\N}{\mathbb{N}}
\newcommand{\Z}{\mathbb{Z}}
\newcommand{\Ll}{\mathrm{L}}
\newcommand{\LL}{\mathcal{L}}
\newcommand{\LLL}{\mathscr{L}}
\newcommand{\Hh}{\mathrm{H}}
\newcommand{\dd}{\,\mathrm{d}}
\newcommand{\ddd}{\mathrm{d}}
\DeclarePairedDelimiter{\abs}{\lvert}{\rvert}
\DeclarePairedDelimiter{\norm}{\lVert}{\rVert}
\DeclarePairedDelimiter{\nnnorm}{\lvert\kern-0.25ex\lvert\kern-0.25ex\lvert}{\rvert\kern-0.25ex\rvert\kern-0.25ex\rvert}
\DeclarePairedDelimiter{\floor}{\lfloor}{\rfloor}
\DeclarePairedDelimiterX{\spr}[2]{\langle}{\rangle}{#1,#2}
\newcommand{\I}{\mathrm{i}}
\newcommand{\e}{\mathrm{e}}
\newcommand{\fourier}{\mathcal{F}}
\newcommand{\ball}[3]{\mathrm{B}_{#1}^{#2}(#3)}
\newcommand{\hausdorff}{\mathscr{H}}
	\renewcommand{\Re}{\operatorname{Re}}
	\renewcommand{\Im}{\operatorname{Im}}
    \renewcommand{\phi}{\varphi}
    \renewcommand{\epsilon}{\varepsilon}
\DeclareMathOperator{\esssup}{ess\,sup}
\DeclareMathOperator{\csch}{csch}
\begin{document}
\maketitle
\vspace{-2.25em}
\begin{abstract}
\noindent{In this note we discuss the difficulty of verifying $\Ll^p$-admissibility 
for $p\neq 2$---that even manifests in the presence of a self-adjoint semigroup generator on a Hilbert space---and
survey tests for $\Ll^p$-admissibility of given control
operators. These tests are obtained by virtue of either mapping properties of boundary
trace operators, yielding a characterization of admissibility via abstract interpolation spaces; or through Laplace--Carleson embeddings, slightly extending results from
Jacob, Partington and Pott \cite{jacob_applications_2014}
to a class of systems which are not necessarily diagonal with respect to sequence spaces. Special focus is laid on illustrating the theory by means of examples based on the heat equation on 
various domains.}
\vspace{0.7em}
\end{abstract}
\begin{description}[font=\rmfamily\bfseries,style=sameline,leftmargin=\widthof{\textbf{Keywords: }}]
\item[Keywords:] admissible operator, infinite-dimensional system, Laplace--Carleson embedding, Weiss conjecture.
\item[Mathematics Subject Classification (2020):] 93B28, 93C05, 93C25, 47N70.
\end{description}
\section{Introduction}
\subsection{State space systems and admissibility}
	We consider abstract linear systems of the form 
		\begin{equation}
			\dot{x}(t) = Ax(t) + Bu(t), \qquad t>0, \label{eq:abstractsystem}
		\end{equation}
	on an infinite-dimensional Banach space $X$ which we call the \emph{state space}.
	Here we require that the operator $A \colon X \supset \mathcal{D}(A) \to X$ generates a strongly continuous semigroup
	of linear operators, or $\mathrm{C}_0$-semigroup, on $X$, denoted by $T = (T(t))_{t\geq 0}$. 
    The \emph{state trajectory}, defined for $t\geq 0$, is denoted by $x$ and the \emph{input function} \emph{\textup{(}\emph{or} control function\textup{)}}, also defined for $t \geq 0$, is denoted by $u$. We require $u$ to be $U$-valued, where $U$ is a Banach space that we call the \emph{input space}. Moreover, inputs $u$ enter the system through the \emph{control operator} $B$ only.
 
	In the field of infinite-dimensional systems theory there
    is a wide array of literature dealing with bounded control operators; that is to say, systems with $B\in\LL(U,X)$. However, many systems arising from controlled partial differential equations lead to
	control operators that fail to be bounded as maps from $U$ to $X$; for example in the
    case of control acting on the boundary. This leads us to the study of
	unbounded control operators $B$. 
 
    Here $B$ is called an \emph{unbounded} control
	operator if $B$ is a linear operator which is bounded as a map from  $U$ to
 $X_{-1}$, but not in $\LL(U,X)$. The extrapolation space $X_{-1} \supset X$ is defined as the completion 
	of $X$ with the weaker norm 
	\[\norm{z}_{X_{-1}} \coloneq \norm{(sI - A)^{-1} z}_X\]
	for some complex number $s$ in the resolvent set $\rho(A) $. 
 
    This allows us to
    consider \eqref{eq:abstractsystem} in the ambient space $X_{-1}$.
	To that end, consider a fixed initial value $x(0) \in X$ and an unbounded control operator
    $B$. The mild solution of the controlled system \eqref{eq:abstractsystem} 
    is given by
		\begin{equation}
			x(t) = T(t) x(0) + \int_0^t T(t-s) Bu(s) \dd s,\qquad t \geq 0, \label{eq:mildsol}
		\end{equation}
	and takes values which a priori only lie in $X_{-1}$. As we are interested in the
	case with continuous state trajectories taking values in $X$\kern-.1em{}---such as in the case
    of bounded $B$---we want to find conditions on $B$ such that \eqref{eq:mildsol} 
	lies in $X$ for all $t\geq 0$, bringing us to the property of admissibility, \cite{weiss_admissibility_1989,jacobAdmissibilityControlObservation2004,weissAdmissibleObservationOperators1989}.
	\begin{definition}[$\Ll^p$-admissible control operators]
		For $1 \leq p \leq\infty$,  {the control operator $B \in \LL(U,X_{-1})$} is called 
    		\emph{finite-time $\Ll^p$-admissible for $T$ \kern-.15em{}\textup{(}\kern-.03em{}or for $A$\textup{)}}
		if the convolution type Bochner integral \[\int_0^t T(t-s) Bu(s) \dd s\]  {is 
        an element of $X$} for any $t \geq 0$ 
        and  {there exists $C>0$} such that 
			\begin{equation*}
				\norm*{\int_0^t T(t-s) Bu(s) \dd s}_{X} \leq C \norm{u}_{\Ll^p([0,t], U)}
			\end{equation*}
		holds for all input functions $u \in \Ll^p([0,t], U)$.
  
		The operator $B$ is called \emph{infinite-time $\Ll^p$-admissible for $T$ \kern-.15em{}\textup{(}\kern-.03em{}or for $A$\textup{)}} if
		the constant $C > 0$ from above can be chosen independently of $t$.
	\end{definition}
    \begin{remark}
    Some remarks on the definition of $\Ll^p$\kern+.06em{}-admissibility of control operators:    
    \begin{enumerate}[leftmargin=3\parindent,rightmargin=3\parindent] 
        \item If the context is clear, one can also 
        drop the reference to the semigroup $T$ or the generator $A$ when referring to admissibility. Moreover, one can also drop the letter $\mathrm{L}$ and speak of $p$\kern+.06em{}-admissibility of control operators. Lastly, when the distinction between finite-time and infinite-time admissibility is not made, one generally speaks about infinite-time admissibility.
        \item We also note that---due to translation
        invariance of the set of input functions---one can replace $T(t-s)$ by
        $T(s)$ in the integrals above. Moreover, infinite-time $p$\kern+.06em{}-admissibility is equivalent to  {the existence of $C > 0$ such that}
        \begin{equation*}
				\norm*{\int_0^\infty T(s) Bu(s) \dd s}_{X} \leq C \norm{u}_{\Ll^p([0,\infty) , U)}
			\end{equation*}
		 {holds for all $u \in \Ll^p([0,\infty),U)$}.
        \item The property that $B\in\LL(U,X_{-1})$ is $p$\kern+.06em{}-admissible can be rephrased by saying that the mapping $\Phi_{t}\colon u\mapsto \int_{0}^{t}T(t-s)Bu(s)\dd s$ is bounded from $\Ll^p$ to $X$\kern-.1em{}---rather than only mapping to $X_{-1}$---for some and hence all $t>0$. Dualizing this statement, we arrive at 
        an admissibility notion for \emph{observation operators}
        $C \colon \mathcal{D}(A) \to Y$\!, where $\mathcal{D}(A)$ is equipped with the graph norm of $A$ and the Banach space  $Y$ is called the \emph{output space}.
        
        More precisely, we fix $p\in(1,\infty)$ and denote the Hölder conjugate of $p$ by $p'$. Then, provided that the dual semigroup $T'=(T(t)')_{t\ge0}$ is strongly continuous on the Banach space dual $X'$, $B\colon U\to X_{-1}$ is $p$\kern+.06em{}-admissible if and only if the operator $C=B'\colon \mathcal{D}(A')\to U'$ satisfies 
        \begin{equation*}
        \int_0^t \norm{B'T(t)'x}_Y^{{ p'}} \dd t \leq K \norm{x}_{X'}^{p}, \qquad x \in \mathcal{D}(A')
        \end{equation*}
        for some $K>0$, see e.g.\  {\cite[Theorem~6.9]{weissAdmissibleObservationOperators1989}}.\end{enumerate}
    \end{remark}
    \subsection{Boundary control systems}
	The most natural form of many systems of partial differential equations 
	with control on the boundary of the domain is not the state space form as in \eqref{eq:abstractsystem} above. 
	Rather, in these systems the control enters via the boundary condition;
	for example in the Dirichlet trace sense $x|_{\partial \Omega} = u$. This then leads to the type of
	systems known as \emph{boundary control systems}, see e.g.\ Salamon \cite{salamonInfiniteDimensionalLinear1987}
    and Tucsnak and Weiss \cite{tucsnak_observation_2009}. Boundary control systems take the form
		\begin{equation}
        \left\{
        \begin{aligned}
					\dot x(t) &= \mathfrak{A} x(t) & & \text{on } (0,\infty),\\
			\mathfrak{B} x(t) &= u(t) & &\text{on } (0,\infty),\\
			x(0) &= x_0. & & 
        \end{aligned}
        \right.
        \label{eq:BCS}
		\end{equation}
	Typically, $\mathfrak{A}$ is a differential operator and $\mathfrak{B}$ 
    is a boundary trace operator, such that the system corresponds to
	a (partial) differential equation with the control input acting via the 
	boundary of the spatial domain. To relate this formulation to the state space form, we need some well-posedness assumptions.
	\begin{definition}[{boundary control systems}]
		Given Banach spaces $X$ and $U$ and closed operators 
        \[
            \mathfrak{A} \colon X \supset \mathcal{D}(\mathfrak{A}) \to X,\qquad 
            \mathfrak{B} \colon X \supset \mathcal{D}(\mathfrak{B}) = 
            \mathcal{D}(\mathfrak{A}) \to U,
        \] 
        where we have normed the space $\mathcal{D}(\mathfrak{A})$ with the graph norm. If now the restriction of $\mathfrak A$ to 
        $\ker \mathfrak B$ is the generator of a $\mathrm{C}_0$-semigroup on $X$
		and $\mathfrak{B}$ has a right inverse $B_0 \in \LL(U,\mathcal{D}(\mathfrak{A}))$,
		then we call the pair of operators $(\mathfrak{A},\mathfrak{B})$ and the corresponding equations \eqref{eq:BCS}
		a \emph{\textup{(}linear\textup{)}\,boundary control system}.  
	\end{definition}
	\begin{remark}
		One can translate a boundary control system on $X$ into a state space system on $X$ by
		setting 
        \begin{equation}
        A \coloneq \mathfrak{A}|_{\ker \mathfrak{B}},\qquad B = (\mathfrak{A} - A_{-1})B_0,
        \label{eq:BCStostate}
        \end{equation}
        where $A_{-1} \colon X_{-1} \supset \mathcal{D}(A_{-1}) \to X_{-1}$ is  {the unique isometric extension of $A$ to the space $X_{-1}$ with domain $\mathcal{D}(A_{-1}) = X$, see e.g.\ \cite{staffans_well-posed_2005, tucsnak_observation_2009, engel_one-parameter_2000}.}
		More on the rewriting procedure for boundary control systems into state space form can e.g.\ be found 
		in \cite{schwenninger_input--state_2020} or \cite[Chapter~10]{tucsnak_observation_2009}.
		
        This correspondence allows us
		to transfer concepts formulated in the state space system framework---including admissibility---to the boundary control context. The intended meaning
		here is that the operator $B$ that results from $\mathfrak{B}$ through the rewriting procedure
		is admissible for $A = \mathfrak{A}|_{\ker \mathfrak{B}}$. While the right inverse
		involved in the construction of $B$ is generally not unique, the operator $B$ is uniquely determined. Yet, computing $B$ from the expression given in \eqref{eq:BCStostate} above is usually not feasible. However, on Hilbert spaces, the relation 
		\begin{equation}
			\spr{\mathfrak{A} x}{y} - \spr{x}{A^* y} = \spr{\mathfrak{B}x}{B^* y} ,\label{eq:hilbertcontrolop}
		\end{equation}
		see e.g.\ \cite[Proposition 2.9]{schwenninger_input--state_2020},
		holds for all $x\in \mathcal{D}(\mathfrak{A})$ and $y \in \mathcal{D}(A^*)$. 
        With this identity, we can identify the control
		operator $B$, simplifying the computation in contrast to \eqref{eq:BCStostate}.
	\end{remark}
    \subsection{Current state of $\mathbf{L}^{\boldsymbol{p}}$-admissibility theory}
	When verifying $p$\kern+.06em{}-admissibility, there is a vast difference between 
    the cases $p = 2$ and $p \neq 2$; a result of the missing Hilbert space structure
    in the latter case. This discrepancy is reflected in the amount of results 
    available in the literature on $2$-admissibility compared to  
    $p$\kern+.06em{}-admissibility, see also the survey article \cite{jacobAdmissibilityControlObservation2004} by Jacob and Partington 
    that deals mostly with the case $p = 2$.
    As an example,
    for $p=2$ characterizations of admissibility using Lyapunov 
    equations and Lyapunov inequalities are available, 
    compare also \cite[Chapter~5]{tucsnak_observation_2009}.
    These methods do not generalize to $p\neq 2$
    as they rely on the Hilbert space structure of the state space $X$, of the input space $U$ and of the $\Ll^2$ space in the domain of the input functions; 
    illustrating the added difficulty of checking for $p$\kern+.06em{}-admissibility.
    
    This disparity could lead to the question of practical relevance of the more general concept of 
    $p$\kern+.06em-{}admissibility compared to $2$-admissibility.
    In other words, one could
    ask if there are any common systems that are not $2$-admissible but only
    $p$\kern+.06em{}-admissible for some $p > 2$. Concretely, we are interested in finding examples of systems arising from partial differential equations---on a natural
	state space such as $\Ll^2(\Omega)$ for some domain 
    $\Omega \subset \R^n$---such that the corresponding control operator $B$ 
    is not $2$-admissible.
	Examples of this type exist; for instance the heat equation on the spatial 
	domain $\Omega = [0,1]$ with Dirichlet boundary control and 
    scalar input space $U = \C$. 
	Here the control operator $B$ resulting from rewriting 
	the boundary control system into state space form turns out to be 
	$B = \delta_0'$, which is $p$\kern+.06em{}-admissible for all $p > 4$ and 
	not $p$\kern+.06em{}-admissible for $p \leq 4$, see \cite{lasiecka_control_2000} and also 
    Section \ref{sec:dirichletheateq} in the sequel. Moreover, $p$\kern+.06em{}-admissibility (in a weighted variant\kern-.04em{}) appears as a prerequisite to the abstract Kato method in the article \cite{haakKatoMethodNavier2009} by Haak and Kunstmann to derive results for Navier--Stokes equations.
    
    In the context of abstract linear systems, $p$\kern+.06em{}-admissibility for 
    $p \neq 2$ has been studied in the  {operator-theoretic} framework
    by and since Weiss's \cite{weiss_admissibility_1989,weissAdmissibleObservationOperators1989} seminal works on observation  and control operators. Weiss also justifies the assumption 
    $B \in \LL(U,X_{-1})$ in the definition of an admissible operator by showing that 
    each ``admissible'' operator $B$ (that would be bounded only from $U$ to some space $V$ in which $X$ lies dense) is equivalent to an operator $\hat B$ that is bounded $U \to X_{-1}$ in the sense that they lead to the same abstract linear system. Furthermore, tests for $p$\kern+.06em{}-admissibility are given for some special cases,
    such as $1$-admissibility on reflexive $X$, as Weiss \cite{weiss_admissibility_1989} shows that $B$ is admissible in this 
    situation if and only if $B \in \LL(U,X)$. 
    
    It should be mentioned that the concept of admissibility---with respect to $\Ll^2$---had previously appeared in Salamon \cite{salamonInfiniteDimensionalLinear1987} as hypothesis (S2). Connections between
    state space systems and boundary control systems are also discussed in the aforementioned work by
    Salamon, and we also mention the work by Curtain and Pritchard \cite{curtainInfiniteDimensionalLinear1978} for more on this topic. 

    Among approaches to characterizing admissibility, important examples are
    Carleson measure criteria for diagonal systems, which
    are known as such due to being based on certain discrete
    measures with weights depending on the system in question being Carleson measures.
    A method based on these conditions for checking $2$-admissibility with scalar input space $U = \C$ 
    appears in Ho and Russell \cite{hoAdmissibleInputElements1983}, was extended to
    an equivalent condition by Weiss \cite{weissAdmissibilityInputElements1988} and was further expanded to control operators defined on $\ell^2(\N)$ by Hansen and Weiss \cite{hansenOperatorCarlesonMeasure1991}. Generalizations of these results
    to the context of $\Ll^p$-admissibility for diagonal semigroups on state spaces of  {$\ell^q$-type} using Laplace--Carleson embeddings can be
    found in works by Haak \cite{haakCarlesonMeasureCriterion2010}, by Unteregge \cite{untereggePAdmissibleControlElements2007} by Jacob, Partington and Pott  \cite{jacob_applications_2014} and by Jacob, Partington, Pott, Rydhe and Schwenninger \cite{jacobLaplaceCarlesonEmbeddingsInfinitynorm2023}. In this frame of reference we also
    mention that extensions to normal semigroups have appeared in Weiss \cite{weissPowerfulGeneralizationCarleson1999}, Hansen and Weiss \cite{hansenOperatorCarlesonMeasure1991}, Unteregge \cite{untereggePAdmissibleControlElements2007} and Staffans \cite{staffans_well-posed_2005}.
    
    Another property of note which is used to derive results for admissibility 
    is the Weiss property and its $\Ll^p$-version. 
    For $p\in[1,\infty]$ the \emph{$p$\kern+.06em{}-\kern-.15em{}Weiss property for control operators}
	for the generator $A$ is the statement that for any Banach space $U$ and any $B\in\LL(U,X_{-1})$ we have the equivalence
	\[
	\text{$B$ is infinite-time $p$\kern+.06em{}-admissible }\Leftrightarrow 
	\sup_{\Re \lambda > 0}(\Re \lambda)^{\frac 1p}\norm{R(\lambda,A) B} < \infty;
	\]
    for the analogous formulation for observation operators see
    Section~\ref{sec:applicationsinfdim} in the sequel.
    
    The property
    originates from conjectures formulated by Weiss in 
    \cite{weissTwoConjecturesAdmissibility1991}, which postulated
    the equivalence of a resolvent condition and admissibility. In
    \cite{weissTwoConjecturesAdmissibility1991} the author remarked that the conjecture is not true on
    general state spaces $X$ that are not Hilbert spaces. Later it was shown that the
    conjecture is false even in the Hilbert space setting.
    For counterexamples see Jacob, Partington and Pott \cite{jacobADMISSIBLEWEAKLYADMISSIBLE2002} based on the right shift semigroup; compare with Zwart and Jacob \cite{jacobDisproof2000} and Zwart, Jacob and Staffans \cite{zwartWeakAdmissibilityDoes2003a} with counterexamples where the semigroups are even analytic. However---in special cases---positive results on the Weiss property were
    also found, e.g.\ by Jacob and Partington \cite{jacobWeissConjectureAdmissibility2001} for contraction semigroups with $p = 2$ and finite-dimensional input spaces, and by Le~Merdy \cite{lemerdyWeissConjectureBounded2003} in the case of $2$-admissibility for bounded analytic semigroups satisfying a square function estimate and arbitrary input spaces. 
    
    In conjunction with Le~Merdy's article  \cite{lemerdyWeissConjectureBounded2003} the connection with
    functional calculus methods and square function estimates must also be mentioned, which are used as a powerful tool 
    to generate estimates in the admissibility context. With these methods, Le Merdy showed that if $A$ has dense range and
    generates a bounded analytic semigroup, then admissibility of $(-A)^{1/2}$ in the observation operator sense---which is a square function estimate for $A$---is equivalent to $A$ having the $2$-\kern-.15em{}Weiss property.
    That is to say that $C \colon X_1 \to Y$ is $2$-observation-admissible for $A$ if and only if 
    \[
        \sup_{\Re \lambda > 0} \sqrt{\Re \lambda}\, \norm{C R(\lambda, A)} < \infty.
    \]
    Note that for Hilbert spaces, the Weiss property is specifically satisfied if $A$ is densely defined and generates an analytic contraction semigroup.
    
    Further results in this direction appear in the work \cite{haakLeMerdyAlphaAdmissible2005} by Haak and Le Merdy, where the $\mathrm{H}^\infty$ functional calculus
    is used to show the equivalence of certain square function estimates and weighted $2$-admissibility for analytic semigroups.
    The functional calculus also is an integral
    part of the dissertation \cite{haakKontrolltheorieBanachraeumenUnd2005} by Haak, where extensions of Le~Merdy's results to $\Ll^p$-admissibility are treated with $\Ll^p$ estimates
    in lieu of the square function estimates that appear in the $\Ll^2$ setting. In particular, $p$\kern+.06em{}-admissibility is characterized under the assumption that
    $(-A)^{1/p}$ is $p$\kern+.06em{}-admissible for $A$. The article
    \cite{haakWeightedAdmissibilityWellposedness2007} by Haak and Kunstmann features 
    generalized results in this direction. Specifically, the authors investigate the more general property of weighted $\Ll^p$-admissibility of type $\alpha$.
    Here so\kern+.02em{}-called $\Ll^p_*$ functional calculus estimates are used, generalizing the square function estimates appearing in the condition of 
    admissibility of $(-A)^{1/2}$. With this assumption, the authors prove that the
    Weiss resolvent condition in the $\Ll^p$ version is equivalent to $p$\kern+.06em{}-admissibility of $B$. They also show that $-A$ has $\Ll^p_*$
    estimates if and only if the state space $X$ embeds into the real interpolation space $(\dot{X}_{-1},\dot{X}_1)_{1/2,p}$. This leads to results for concrete 
    example systems if full expressions for these interpolation spaces are
    known, such as the Neumann controlled heat equation on $\Ll^q$ spaces or
    zero smoothness Besov spaces $\mathrm{B}^0_{q,p}$. Furthermore, a test for 
    the resolvent condition in the Weiss property via real interpolation spaces is given.
    
    For more applications of functional calculus methods to
    proofs of admissibility, we refer to Haak \cite{haakWeissConjectureWeak2012}, where
    the interplay of the Weiss condition and admissibility with respect to the weak-type
    space $\Ll^{2,\infty}$ is discussed. 
    Adding to the positive results on the Weiss conjecture, in \cite{hamidDirectApproachWeiss2010} the authors
    Bounit, Driouich and El-Mennaoui characterize the $p$\kern+.06em{}-Weiss
    condition for bounded analytic semigroups  by admissibility
    of $(-A)^{1/p}$ without resorting to 
    the $\mathrm{H}^\infty$ functional calculus.

    Finally we remark that the above list is not exhaustive, as there are even more methods of showing $\Ll^p$-admissibility
    such as semigroup generation criteria. These are methods based on block operator matrices generating $\mathrm{C}_0$-semigroups; see
    Grabowski and Callier \cite{grabowskiAdmissibleObservationOperators1996}, 
    Engel \cite{engelCharacterizationAdmissibleControl1998} and the recent article 
    \cite{hosfeldCharacterizationOrliczAdmissibility2023}
    by Hosfeld, Jacob and Schwenninger, which treats the more general case of admissibility with respect to Orlicz spaces. 
    We further stress that $\infty$\kern+.03em{}-admissibility, which is formally very closely related to the notion of input-to-state stability ({\textsc{iss}}), see e.g.\  the survey article \cite{mironchenkoInputtoStateStabilityInfiniteDimensional2020}, is not the focus
    of the present note. However, as $p$\kern+.06em{}-admissibility for any finite $p$ implies $\infty$\kern+.03em{}-admissibility, the discussed results serve as natural sufficient conditions for {\textsc{iss}}.
    \subsection{Aims}
    The methods of verifying $p$\kern+.06em{}-admissibility---for $p \neq 2$---that we set our focus  {on} can be grouped into
	\begin{enumerate}[leftmargin=3\parindent,rightmargin=3\parindent,label=(\kern+.035em\roman*\kern+.035em{})]
		\item 	sufficient conditions for analytic semigroups using mapping properties of boundary trace operators, see e.g.\ \cite{lasiecka_control_2000,schwenninger_input--state_2020,maragh_admissible_2014},
		\item 	tests arising from Laplace--Carleson 
				embeddings for diagonal semigroups with finite-dimensional input
                spaces, see e.g.\ \cite{jacob_applications_2014,jacobLaplaceCarlesonEmbeddingsInfinitynorm2023,untereggePAdmissibleControlElements2007,haakCarlesonMeasureCriterion2010}, 
		\item 	applying $p$\kern+.06em{}-\kern-.15em{}Weiss property type results in cases where the
				property is valid, see e.g.\ \cite{lemerdyWeissConjectureBounded2003, weissTwoConjecturesAdmissibility1991,haakAdmissibilityUnboundedOperators2006,haakWeightedAdmissibilityWellposedness2007,haakKontrolltheorieBanachraeumenUnd2005,hamidDirectApproachWeiss2010}.
	\end{enumerate}
    Our aim is to provide a comparison---with some extensions---of the aforementioned methods of
    verifying $p$\kern+.06em{}-admissibility while reviewing applicability to infinite-dimensional input spaces and potential problems in applications.
    In this setting, we also comment on the case $p > 2$ posing even more
    difficulties than the case $p \leq 2$.
    
    Elaborating on this, we discuss that (\kern+.045em{}i\kern+.045em{}) even leads to 
	a sort of characterization of $p$\kern+.06em{}-admissibility for values of $p$ strictly greater than
	a certain threshold, which depends on the interpolation space that the 
	control operator $B$ maps into, see also \cite{maragh_admissible_2014}.

	For (\kern+.045em{}ii\kern+.045em{}) we note that $\Ll^p$-admissibility of the input element
    $b \in \LL(\C,X_{-1})$ is 
	equivalent to boundedness of the Laplace transform operator $\LLL$, which is
    to be understood as a mapping 
	\[
        \LLL \colon \Ll^p([0,\infty)) \to \Ll^2(\C_+, \mu).
    \]
    The measure $\mu$ is given by
	the discrete measure $\mu = \sum_k\,\abs{b_k}^2\delta_{-\lambda_k}$
	in the diagonal case, i.e.~in the presence of a Riesz basis 
	of eigenvectors of the generator
	with eigenvalues $\{\lambda_k\}_{k\in\N}$, cf.~results by Jacob, Partington and Pott \cite{jacob_applications_2014}.
	However, the conditions given in their work are not applicable if 
	the generator does not have compact resolvents as the spectrum is then not discrete.
	For instance, if the generator $A$ is a normal operator, the spectral
    theorem yields a spectral measure $E$ (that  {is not necessarily discrete}) associated to $A$, admitting the representation
    \[
        A = \int_{\sigma(A)} z \dd E (z).
    \]
    Using this fact, we are able to generalize the diagonal results.
	The extensions are established using a suitable generalization of the 
    discrete measure $\mu$ related to the measure $\spr{Eb}{b}$; see 
also similar considerations for the special case $p=2$ in \cite{weissPowerfulGeneralizationCarleson1999,staffans_well-posed_2005,untereggePAdmissibleControlElements2007,hansenOperatorCarlesonMeasure1991}. We also discuss a related
    result for systems with semigroups generated by multiplication operators on
    $\Ll^q$ spaces, generalizing the setting where one has a $q$\kern+.06em{}-Riesz basis. This is related to diagonal systems on $\ell^q$ sequence spaces as in 
    \cite{jacob_applications_2014}.
 
    As an outlook, and relating to (\kern+.045em{}iii\kern+.045em{}), we provide an argument based on the $p$\kern+.06em{}-\kern-.15em{}Weiss property 
    which allows the extension of results for scalar input spaces to 
    infinite-dimensional input spaces. In certain special cases where 
    the $p$\kern+.06em{}-\kern-.15em{}Weiss property does hold, such 
    as for negative semidefinite self-adjoint generators when $p \leq 2$,
    this provides a tool to assess $p$\kern+.06em{}-admissibility for the critical value of $p$; that is to say the value of $p$ that defines the threshold between admissibility and non-admissibility.
	\section{Interpolation spaces, extrapolation spaces and admissibility}
	\subsection{Mapping properties and their relation to admissibility}
    \label{sec:interpolation}
    Here and throughout the rest of the paper, we denote the 
    resolvent of an operator $A$ by $R(z,A) \coloneq (zI - A)^{-1}$ and the 
    range of $A$ by the symbol $\mathcal{R}(A)$.
    Moreover, we denote the \emph{growth bound} of the semigroup $T$ by $\omega(T)$ and we recall that $T$ is called \emph{exponentially stable} if $\omega(T) < 0$.
		A $\mathrm{C}_0$-semigroup $T$ is called \emph{analytic} if there exists $\phi \in (0,{\pi}/{2}]$ such
		that $T$ can be extended to an analytic map on the open sector of angle $2\phi$ around the
		positive real line defined by 
		$S_\phi \coloneq \{ z \in \C \setminus \{0\} : \abs{\operatorname{Arg} z} < \phi \}$.
	In the presence of an analytic semigroup $T$ and a generator $A$ such that $0 \in \rho(A)$, we define a scale of interpolation and extrapolation
	spaces $X_{\beta}$ for $\beta \in (-1,1)$. These spaces lie between the spaces $X_1 = 
    \mathcal{D}(A)$, $X_0 \coloneq X$ and $X_{-1}$. Using this scale of spaces allows for the description of finer mapping
	properties of control operators. This is meant in the sense that if $B$ maps into a space $X_{-\beta}$ for some 
	$\beta \in (0,1)$, then we can use this information to make statements about admissibility.
	In defining these spaces, we make use of fractional powers $(-A)^{-\beta} \colon X \to X$ which are defined by the operator-valued integral
	\[
		(-A)^{-\beta} \coloneq \int_\gamma z^{-\beta} R(z,-A) \dd z,
	\]
	where the curve $\gamma$ is the boundary of a sector of the form $S_\phi$ as above that contains $\sigma(-A)$, see also
	\cite[Chapter~II.5]{engel_one-parameter_2000} and \cite{schwenninger_input--state_2020}. 
	The corresponding positive power $(-A)^\beta$ is then defined as the inverse of $(-A)^{-\beta}$, i.e.
    \[
        (-A)^\beta \coloneq
	   ((-A)^{-\beta})^{-1} \colon \mathcal{R} ((-A)^{-\beta}) \to X.
    \]
		Let $A$ generate an analytic semigroup on $X$ with growth bound less than $0$.
		For any $\beta \in (0,1)$ define the space $X_{-\beta}$
		as the completion of $X$ with respect to the norm given on $X$ by $\norm{x}_{-\beta} \coloneq 
		\norm{(-A)^{-\beta} x}_X$.
		Furthermore, we define the space $X_\beta$ as the set $\mathcal{D}((-A)^\beta) = \mathcal{R}((-A)^{-\beta})$
		with the norm $\norm{x}_{\beta} \coloneq \norm{x}_X + \lVert{(-A)^\beta x}\rVert_X$.
		These spaces are also known as \emph{abstract Sobolev spaces}.
        For general analytic semigroups the spaces $X_\beta$ are defined by considering the shifted generator $A-\lambda I$ for sufficiently large $\lambda$, noting that this definition is independent of the choice of $\lambda$ and hence also consistent.
 
	In the following we present a characterization of $p$\kern+.06em{}-admissibility that is derived from the mapping property that
	$B$ be bounded into $X_{-\beta}$ for some $\beta \in (0,1)$, compare also
    \cite{maragh_admissible_2014}. We also record that for $p \in [1,\infty]$ we use the notation $p'$ for the Hölder 
    conjugate (or conjugate exponent) satisfying $1/p + 1/p' = 1$ with the usual convention for the conjugated exponents of $1$ and $\infty$. The following lemma is essentially well-known in the context of admissibility for analytic semigroups. For the sake of the reader we provide the argument.
	\begin{lemma}\label{lem:interpolation}
		Let $A$ be the generator of an analytic semigroup $T$ on 
		$X$. Fix $0 < \beta < 1$. If $B$ is bounded as an operator 
		$U \to X_{-\alpha}$ for all $\alpha > \beta$, then 
		$B$ is finite-time $p$\kern+.06em{}-admissible for all $p > \frac{1}{1-\beta} = (\beta^{-1})'$.
	\end{lemma}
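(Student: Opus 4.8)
The plan is to exploit the parabolic smoothing of the analytic semigroup to reduce the admissibility inequality to a purely scalar convolution estimate, which is then closed by Hölder's inequality. First I would record the standard analytic-semigroup bound: for every $\alpha\in(0,1)$ there is a constant $C_\alpha>0$ with
\[
\norm{(-A)^\alpha T(s)}_{\LL(X)} \le C_\alpha\, s^{-\alpha}\, \e^{\omega s}, \qquad s>0,
\]
where $\omega$ may be taken larger than the growth bound of $T$. If $T$ is not exponentially stable one works with $A-\lambda I$ for $\lambda$ large, exactly as in the definition of $X_{-\alpha}$; on the finite interval $[0,t]$ this only alters constants, which is harmless for finite-time admissibility.

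Next I would fix any $\alpha$ with $\beta<\alpha<1$ and use that, by the very definition of the norm on $X_{-\alpha}$, the hypothesis ``$B\colon U\to X_{-\alpha}$ bounded'' is equivalent to ``$(-A)^{-\alpha}B\colon U\to X$ bounded.'' Factoring through the extrapolation scale via functional calculus of $A$, one has in $\LL(U,X)$ the identity $T(s)B=\bigl[(-A)^\alpha T(s)\bigr]\,(-A)^{-\alpha}B$ for $s>0$, since $(-A)^\alpha$ commutes with $T(s)$. Combining this with the smoothing bound yields the pointwise operator estimate
\[
\norm{T(s)B}_{\LL(U,X)} \le C_\alpha\,\e^{\omega s}\,\norm{(-A)^{-\alpha}B}_{\LL(U,X)}\,s^{-\alpha} =: M_\alpha\, s^{-\alpha}, \qquad 0<s\le t,
\]
where $\sup_{0\le s\le t}\e^{\omega s}$ has been absorbed into $M_\alpha$. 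In particular $s\mapsto T(s)Bu(s)$ is $X$-valued.

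With this estimate I would bound the admissibility integral directly, using the translation-invariant form from the Remark and then Hölder with exponents $p,p'$:
\[
\norm*{\int_0^t T(s)Bu(t-s)\dd s}_X \le M_\alpha\int_0^t s^{-\alpha}\norm{u(t-s)}_U\dd s \le M_\alpha\Bigl(\int_0^t s^{-\alpha p'}\dd s\Bigr)^{1/p'}\norm{u}_{\Ll^p([0,t],U)}.
\]
The scalar integral $\int_0^t s^{-\alpha p'}\dd s=\tfrac{t^{1-\alpha p'}}{1-\alpha p'}$ is finite precisely when $\alpha p'<1$, i.e.\ $\alpha<1/p'=1-\tfrac1p$; the same absolute integrability of the $X$-valued integrand shows the Bochner integral lands in $X$ (not merely $X_{-1}$), with admissibility constant $C=M_\alpha(1-\alpha p')^{-1/p'}t^{(1-\alpha p')/p'}$ depending on $t$, as finite-time admissibility permits. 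It remains only to note that a suitable $\alpha$ exists: one needs $\beta<\alpha<1-\tfrac1p$, which is possible exactly when $\beta<1-\tfrac1p$, equivalently $p>\tfrac{1}{1-\beta}=(\beta^{-1})'$, matching the stated range.

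I expect the only genuinely delicate point to be the first two steps, namely justifying the fractional-power factorization $T(s)B=[(-A)^\alpha T(s)]\,(-A)^{-\alpha}B$ within the extrapolation scale and invoking the smoothing bound $\norm{(-A)^\alpha T(s)}\lesssim s^{-\alpha}$; one must check that the identification of ``$B$ bounded into $X_{-\alpha}$'' with ``$(-A)^{-\alpha}B$ bounded into $X$'' is consistent across the scale $X_\gamma$. Everything downstream is the elementary convergence of $\int_0^t s^{-\alpha p'}\dd s$, which is precisely what pins down the threshold $p>(1-\beta)^{-1}$.
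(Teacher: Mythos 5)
Your proposal is correct and follows essentially the same route as the paper's proof: the factorization $T(s)B = [(-A)^\alpha T(s)]\,(-A)^{-\alpha}B$ for some $\alpha\in(\beta,1-\tfrac1p)$, the analytic smoothing bound $\norm{(-A)^\alpha T(s)}_{\LL(X)}\lesssim s^{-\alpha}\e^{\omega s}$ from Pazy, and Hölder's inequality reducing everything to the convergence of $\int_0^t s^{-\alpha p'}\dd s$, which holds precisely when $\alpha p' < 1$. The only cosmetic differences are that the paper keeps the exponential factor inside the Hölder integral (after a without-loss-of-generality reduction to exponential stability) rather than absorbing it into the constant on $[0,t]$, and your explicit remark on the existence of a valid $\alpha$ makes the threshold computation slightly more transparent.
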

	\begin{proof}
	We fix $0 < \beta < 1$ and let $B \in \LL(U,X_{-1})$.
 Without loss of generality, assume that $T$ is exponentially stable. Assume 
	$B \in \LL(U,X_{-\alpha})$ for an arbitrary $\alpha > \beta$. Then $(-A)^{-\alpha}B \in 
	\LL(U,X)$. Note that for $A$ generating an analytic semigroup we have
	\[
		\norm{(-A)^\alpha T(t)}_{\LL(X)} \leq C_\alpha t^{-\alpha} \e^{t\omega}
	\]
	for $t > 0$ and $\omega > \omega(T)$ and some constant $C_\alpha > 0$, see \cite[Theorem~6.13]{pazySemigroupsLinearOperators1983}. Using these properties,
	we can deduce that
	\begin{align*}
				\norm{T(t)B}_{\LL(U,X)} &= \norm{T(t) (-A)^\alpha (-A)^{-\alpha} B}_{\LL(U,X)}\\
				&\leq \norm{T(t) (-A)^\alpha}_{\LL(X)} \norm{(-A)^{-\alpha} B}_{\LL(U,X)}\\
				&\leq C_\alpha t^{-\alpha} \e^{t\omega} \norm{(-A)^{-\alpha} B}_{\LL(U,X)}\\
				&\leq C_\alpha t^{-\alpha} \e^{t\omega} \norm{B}_{\LL(U,X_{-\alpha})}
	\end{align*}
	holds for $t > 0$ and $\omega > \omega(T)$. Applying this to the  {left-hand} side of the
	defining inequality for $\Ll^p$-admissibility, we get 
	\begin{align*}
				\norm*{\int_0^t T(t-s) Bu(s) \dd s}_X  &\leq \int_0^t \norm{T(t-s) Bu(s)}_X 
				\dd s\\
				&\leq C_\alpha \int_0^t (t-s)^{-\alpha} \e^{(t-s)\omega} 
				\norm{B}_{\LL(U,X_{-\alpha})} \norm{u(s)}_U \dd s.
	\end{align*}
	We can now use Hölder's inequality to further estimate this quantity and find 
	\begin{equation*}
		\int_0^t (t-s)^{-\alpha} \e^{(t-s)\omega} \norm{u(s)}_U \dd s
		\leq \norm{u}_{\Ll^p([0,t],U)} \biggl(\int_0^t ( (t-s)^{-\alpha} \e^{(t-s)\omega} )^{p'}
		\mathrm{d} s\biggr)^{1/p'}\!.
	\end{equation*}
	The  {right-hand} side is finite if and only if $p' < \frac 1 \alpha$. Combining the 
	two above inequalities, we get 
	\begin{equation*}
		\norm*{\int_0^t T(t-s) Bu(s) \dd s}_X \leq K \norm{u}_{\Ll^p([0,t],U)}
	\end{equation*}
	for $p' < \frac 1 \alpha$ and some constant $K>0$. Thus $B$ is $p$\kern+.06em{}-admissible for all $p > \frac{1}{1-\beta}$.
	\end{proof}
	We want to show that the converse of this statement also holds. To do this, we need
	to introduce the scale of Favard spaces, which are another useful class of intermediate spaces
	as there are embeddings that allow us to derive $\Ll^p$ estimates. We give a short summary of the needed results and properties of these spaces.
	\begin{definition}[Favard spaces \cite{engel_one-parameter_2000}]
	Let $T$ be an exponentially stable $\mathrm{C}_0$-semigroup.
 For $0 < \alpha \leq 1$
	we define the \emph{$\alpha$-Favard norm} by
	\begin{equation*}
		\norm{x}_{F_\alpha} \coloneq \sup_{t>0}{t^{-\alpha}\norm{T(t)x - x}_X}
	\end{equation*}
	and the \emph{Favard space $F_\alpha$} as the set of all $x \in X$ with 
	finite $\alpha$-Favard norm.

	To extend the definition to arbitrary real indices $\alpha$, we decompose the 
    number $\alpha \in \R$ into
	an integer part and a fractional part; that is to say we write $\alpha = k + \beta$ (uniquely) with $k\in\Z$ and 
	$0 < \beta \leq 1$ and define $F_\alpha$ to be the Favard space of order $\beta$
	for the extended (resp.~restricted) semigroup $T_k$. Moreover, if $T$ is not exponentially stable, we define $F_\alpha$ as the $\alpha$-Favard space corresponding to the semigroup $(\e^{-\omega t}T(t))_{t\geq 0}$ for some $\omega > \omega(T)$. 
	\end{definition}
 Note that this definition yields the same space for any such choice of $\omega$;  {for more on this topic we refer to Engel and Nagel \cite[Chapter~II.5]{engel_one-parameter_2000}}.
 
	We now quote results about Favard spaces needed in the sequel.
	\begin{remark}\label{rem:favard}
	For $A$ generating a $\mathrm{C}_0$-semigroup $T$ the following hold:
	\begin{enumerate}[leftmargin=3\parindent,rightmargin=3\parindent]
		\item If $\alpha > \beta$, we have 
		\begin{equation*}
			X_\alpha \subset F_\alpha \hookrightarrow X_\beta \subset F_\beta,
		\end{equation*}
		 {cf.~\cite[Proposition~II.5.14, Proposition~II.5.33]{engel_one-parameter_2000}; note that $X_\alpha$ is defined alternatively there.}
		\item If $T$ is exponentially stable, the $\alpha$-Favard norm with $0 < \alpha \leq 1$ is equivalent to
		\begin{equation*}
			\nnnorm{x}_{F_\alpha} \coloneq \sup_{\lambda>0}{\lambda^{\alpha}\norm{AR(\lambda,A)x}_X},
		\end{equation*}\label{rem:favardproperties}
		cf.~\cite[Proposition~II.5.12]{engel_one-parameter_2000}.
	\end{enumerate}
	\end{remark}
 We give a characterization of the property that $B$ maps into extrapolation spaces of the form $X_{-\beta}$ for $\beta \in (0,1)$ by admissibility of $B$, providing a counterpart to Lemma~\ref{lem:interpolation} above. In this context we note that similar results have appeared in \cite{maragh_admissible_2014}.
 \begin{lemma}\label{lem:resolventfractionalspace}
  {Let $A$ generate an analytic semigroup $T$ on $X$.}
    Let $\beta \in (0,1)$ and assume that for all $p > \frac{1}{1-\beta}$ the resolvent condition in the $p$\kern+.06em{}-\kern-.15em{}Weiss condition holds for $A$ and a given $B \in \LL(U,X_{-1})$, i.e.\ 
    \begin{equation}
        {\exists \alpha > \omega(T)\colon} \sup_{\Re \lambda > \alpha} (\Re \lambda)^{1/p} \norm{R(\lambda,A)B}_{\LL(U,X)} < \infty. \label{eq:resolventcondition}
    \end{equation}
    Then $B$ is bounded as a mapping $B\colon U \to X_{-\alpha}$ for all $\alpha > \beta$.
    \end{lemma}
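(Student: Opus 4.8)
The plan is to deduce the claimed mapping property from the Favard space machinery recorded in Remark~\ref{rem:favard}, which converts the resolvent bound \eqref{eq:resolventcondition} into a bound on an \emph{extrapolated} Favard norm. To avoid a clash with the shift parameter appearing in \eqref{eq:resolventcondition}, I write the target index as $\gamma$, so that the goal becomes $B\in\LL(U,X_{-\gamma})$ for every $\gamma>\beta$. I would make two preliminary reductions. First, the case $\gamma\ge 1$ is immediate, since $B\in\LL(U,X_{-1})\subseteq\LL(U,X_{-\gamma})$, so I may assume $\gamma\in(\beta,1)$. Second, because the spaces $X_{-\gamma}$ are defined through a shifted generator and are independent of the shift, and because \eqref{eq:resolventcondition} persists---with an adjusted threshold and constant---under replacing $A$ by $A-\omega I$ for $\omega>\omega(T)$ (as $R(\lambda,A-\omega I)=R(\lambda+\omega,A)$ and $(\Re\mu-\omega)^{1/p}\le(\Re\mu)^{1/p}$), I may assume $T$ is exponentially stable with $0\in\rho(A)$. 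This is precisely what lets me invoke the resolvent form of the Favard norm in Remark~\ref{rem:favard}(2).

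Next I would fix $\gamma'$ with $\beta<\gamma'<\gamma$ and use the embedding $F_{a}\hookrightarrow X_{b}$ for $a>b$ from Remark~\ref{rem:favard}(1), applied with $a=-\gamma'$ and $b=-\gamma$, to obtain $F_{-\gamma'}\hookrightarrow X_{-\gamma}$. It therefore suffices to show that $B$ maps $U$ boundedly into $F_{-\gamma'}$. Writing $-\gamma'=-1+(1-\gamma')$ with $1-\gamma'\in(0,1)$, the space $F_{-\gamma'}$ is the $(1-\gamma')$-Favard space of the extrapolated semigroup $T_{-1}$ on $X_{-1}$ generated by $A_{-1}$, which is again exponentially stable. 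Applying Remark~\ref{rem:favard}(2) on $X_{-1}$ gives, for $u\in U$,
\[
\nnnorm{Bu}_{F_{-\gamma'}}=\sup_{\lambda>0}\lambda^{1-\gamma'}\norm{A_{-1}R(\lambda,A_{-1})Bu}_{X_{-1}}.
\]

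The crux is to simplify the inner norm. Using $\norm{z}_{X_{-1}}=\norm{(-A_{-1})^{-1}z}_X$, the cancellation $(-A_{-1})^{-1}A_{-1}w=-w$ valid for $w\in\mathcal{D}(A_{-1})=X$, and the fact that $R(\lambda,A_{-1})Bu\in X$, I expect
\[
\norm{A_{-1}R(\lambda,A_{-1})Bu}_{X_{-1}}=\norm{R(\lambda,A)Bu}_X\le\norm{R(\lambda,A)B}_{\LL(U,X)}\norm{u}_U,
\]
so that $\nnnorm{Bu}_{F_{-\gamma'}}\le\bigl(\sup_{\lambda>0}\lambda^{1-\gamma'}\norm{R(\lambda,A)B}_{\LL(U,X)}\bigr)\norm{u}_U$. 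Finally I would control this supremum via \eqref{eq:resolventcondition} with the exponent $p=\tfrac{1}{1-\gamma'}$: since $\gamma'>\beta$ we have $p>\tfrac{1}{1-\beta}$, so the hypothesis applies, and for real $\lambda$ above the threshold the weight is exactly $\lambda^{1/p}=\lambda^{1-\gamma'}$. On the remaining bounded range of $\lambda\in(0,\infty)$ the supremum is finite because $\lambda\mapsto\norm{R(\lambda,A)B}_{\LL(U,X)}$ is continuous on $\rho(A)\supseteq(0,\infty)$ and $\lambda^{1-\gamma'}\to0$ as $\lambda\to0^+$. This yields $\nnnorm{Bu}_{F_{-\gamma'}}\le C\norm{u}_U$, hence $B\in\LL(U,F_{-\gamma'})\hookrightarrow\LL(U,X_{-\gamma})$.

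The main obstacle I anticipate is the identity for the inner norm: it must be justified entirely inside the extrapolation space $X_{-1}$, verifying that $R(\lambda,A_{-1})Bu$ genuinely lands in $X=\mathcal{D}(A_{-1})$ and that the cancellation $(-A_{-1})^{-1}A_{-1}=-\mathrm{id}$ is applied only to elements of that domain. The accompanying index bookkeeping---matching the Favard order $1-\gamma'$, the embedding constraint $\gamma'<\gamma$, and the admissible exponent $p=\tfrac{1}{1-\gamma'}>\tfrac{1}{1-\beta}$---is the other delicate point, but all three requirements are met simultaneously for a suitable $\gamma'\in(\beta,\gamma)$ precisely because $\gamma>\beta$.
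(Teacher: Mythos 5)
Your proof is correct and follows essentially the same route as the paper's: both convert the resolvent bound into the equivalent Favard norm $\sup_{\lambda>0}\lambda^{1-\gamma'}\norm{A_{-1}R(\lambda,A_{-1})Bu}_{X_{-1}}$ via the identity $\norm{A_{-1}R(\lambda,A_{-1})Bu}_{X_{-1}}=\norm{R(\lambda,A)Bu}_X$, and then conclude with the embedding $F_{-\gamma'}\hookrightarrow X_{-\gamma}$ from Remark~\ref{rem:favard}, with your choice of $\gamma'\in(\beta,\gamma)$ playing the role of the paper's exponent $1-1/q$. Your treatment is in fact slightly more careful than the paper's, as you explicitly justify the reduction to exponential stability and the passage from the threshold $\Re\lambda>\alpha$ in \eqref{eq:resolventcondition} to all $\lambda>0$, which the paper glosses over.
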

    \begin{proof}
    Again we assume without loss of generality that $T$ is exponentially stable.
	For a fixed $q > \frac{1}{1-\beta}$ we have by the resolvent condition  {\eqref{eq:resolventcondition}} that 
	\begin{equation*}
		\norm{R(\lambda,A)Bu_0}_X \leq K ({ \Re \lambda})^{-1/q} \norm{u_0}_U, \qquad  {\Re\lambda > 0}.
	\end{equation*}
   Using the  {extension $A_{-1}$} of the generator $A$, this implies
   \begin{equation}
    (\Re\lambda)^{1/q} \norm{A R(\lambda,A)Bu_0}_{X_{-1}} \leq K \norm{u_0}_U \label{eq:resolventX-1}
   \end{equation}
   for all $\lambda \in \C$ with $\Re \lambda > 0$.
	By exponential stability of $T$  {and $0 < \gamma < 1$, Item \ref{rem:favardproperties} in Remark \ref{rem:favard} implies that} we can consider the norm
	\begin{equation*}
		\nnnorm{x}_{F_{-\gamma}} = \sup_{\lambda>0}{\lambda^{1-\gamma}\norm{AR(\lambda,A)x}_{X_{-1}}}, \qquad x \in X_{-1},
	\end{equation*}
	which is equivalent to the usual norm on $F_{-\gamma} \subset X_{-1}$. Recall that for negative parameters the Favard space was defined in terms of the extended generator on $X_{-1}$. Using
	\eqref{eq:resolventX-1} we get
	\begin{equation*}
	\nnnorm{Bu_0}_{F_{-(1-1/q)}} = \sup_{\lambda > 0} \lambda^{1 - (1 - 1/q)} \norm{AR(\lambda,A) Bu_0}_{X_{-1}} \leq K \norm{u_0}_U.
	\end{equation*}
	Relating this to abstract Sobolev spaces, notice that for any choice of  {$\alpha \in (\beta,1]$}
    	there exists a $q > 1$ such that $\alpha > 1 - \frac{1}{q} > \beta$. In fact, a possible choice is $q = (1 - \frac{\alpha + \beta}{2})^{-1}$. Embedding relations between 
	Favard and abstract Sobolev spaces from Remark~\ref{rem:favard} then imply $F_{-(1-1/q)} \hookrightarrow X_{-\alpha}$.
	As a consequence, we also have
	\begin{equation*}
		\norm{Bu_0}_{X_{-\alpha}} \leq K_1 \norm{Bu_0}_{F_{-(1-1/q)}} \leq K_2
			\norm{u_0}_U
	\end{equation*}
	for all $\alpha > \beta$. Hence we have shown that $B \in \LL(U,X_{-\alpha})$.
	\end{proof}
   {Note that Lemma~\ref{lem:resolventfractionalspace} is in 
  particular applicable if $B$ is even infinite-time $p$\kern+.06em{}-admissible for all $p > \frac{1}{1-\beta}$. Indeed, specifying the input function $u = \e^{-\lambda(\,\cdot\,)}u_0$ for $u_0 \in U$ and using the representation of the resolvent as the Laplace transform of the semigroup, the definition of infinite-time $p$\kern+.06em{}-admissibility yields \begin{equation*}
		\norm{R(\lambda,A)Bu_0}_X \leq K (\Re \lambda)^{-1/p} \norm{u_0}_U, \qquad \lambda \in \rho(A).
	\end{equation*}}
  {
 The next theorem combines Lemmas~\ref{lem:interpolation} and \ref{lem:resolventfractionalspace} into a characterization of
 $p$\kern+.06em{}-admissibility of $B$ for $p$ greater than a threshold in terms of 
 boundedness of $B$ into spaces of type $X_{-\alpha}$.}
 \begin{theorem} {
 Let $A$ be the generator of an analytic and exponentially stable semigroup $T$ on $X$ and let $B \in \LL(U,X_{-1})$ be a control operator.
 Then the following are equivalent for $\beta \in (0,1)$:
    \begin{enumerate}[leftmargin=3\parindent,rightmargin=3\parindent,label=\textup{(}\alph*\textup{)}]
    \item $B$ is finite-time $\Ll^p$-admissible for all $p > \frac{1}{1-\beta}$.
    \item $B$ is a bounded map into $X_{-\alpha}$ for all $\alpha > \beta$.
    \end{enumerate}
     }
     \begin{proof} {
    If $B\in \LL(U,X_{-1})$ is $\Ll^p$-admissible for all $p > \frac{1}{1-\beta}$, then $B$ is also infinite-time $\Ll^p$-admissible for all $p > \frac{1}{1-\beta}$ due to the assumption of exponential stability. Since in this case the resolvent condition \eqref{eq:resolventcondition} holds for all $p > \frac{1}{1-\beta}$, we have $B \in \LL(U,X_{-\alpha})$ for
    $\alpha > \beta$ by Lemma \ref{lem:resolventfractionalspace}. The implication $\text{(b)} \Rightarrow \text{(a)}$ directly follows from an application of 
    Lemma~\ref{lem:interpolation}.}
     \end{proof}
 \end{theorem}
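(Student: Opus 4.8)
The plan is to read the theorem as a direct synthesis of the two preceding lemmas and the remark sandwiched between Lemma~\ref{lem:resolventfractionalspace} and the statement. Accordingly, I would prove the two implications separately, matching each direction to one lemma, and spend my effort only on the bridge between admissibility and the resolvent condition \eqref{eq:resolventcondition}.

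For the implication $\text{(b)} \Rightarrow \text{(a)}$ there is essentially nothing new to carry out. The hypothesis that $B$ is bounded as a map $U \to X_{-\alpha}$ for every $\alpha > \beta$ is precisely the standing assumption of Lemma~\ref{lem:interpolation}, whose conclusion is finite-time $p$\kern+.06em{}-admissibility for all $p > \frac{1}{1-\beta}$. So I would simply invoke that lemma and be done.

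The substance lies in $\text{(a)} \Rightarrow \text{(b)}$, and I would break it into three steps. First, I would upgrade finite-time to infinite-time admissibility: since $T$ is exponentially stable, the admissibility constant can be chosen uniformly in $t$, so finite-time $\Ll^p$-admissibility for all $p > \frac{1}{1-\beta}$ already yields infinite-time $\Ll^p$-admissibility for the same range of $p$. Second, I would insert the exponential input $u = \e^{-\lambda(\,\cdot\,)}u_0$ with $u_0 \in U$ into the infinite-time estimate and use the Laplace-transform representation $R(\lambda,A)Bu_0 = \int_0^\infty \e^{-\lambda s} T(s) Bu_0 \dd s$; computing $\norm{u}_{\Ll^p([0,\infty),U)} = (p\Re\lambda)^{-1/p}\norm{u_0}_U$ then reads off exactly the resolvent bound $\norm{R(\lambda,A)Bu_0}_X \le K(\Re\lambda)^{-1/p}\norm{u_0}_U$, i.e.\ condition \eqref{eq:resolventcondition}, for every $p > \frac{1}{1-\beta}$. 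This is precisely the computation spelled out in the remark following Lemma~\ref{lem:resolventfractionalspace}, so I would either cite it or reproduce it in one line. Third, with \eqref{eq:resolventcondition} verified on the whole range of $p$, Lemma~\ref{lem:resolventfractionalspace} delivers $B \in \LL(U,X_{-\alpha})$ for all $\alpha > \beta$, which is exactly (b).

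The step I expect to be the only genuine obstacle is the passage from admissibility to the resolvent condition in the second step. One must check that the exponential input is admissible for the argument, which it is precisely because $\Re\lambda > 0$ forces $u \in \Ll^p([0,\infty),U)$; that the Bochner integral defining $R(\lambda,A)Bu_0$ converges in $X$ rather than only in the ambient space $X_{-1}$, which is exactly what infinite-time admissibility guarantees; and that the resulting bound is uniform over the relevant half-plane so that the supremum in \eqref{eq:resolventcondition} is finite. Everything outside this step is bookkeeping that combines the two lemmas, and exponential stability is what makes the finite-to-infinite-time upgrade (and hence the whole equivalence) go through.
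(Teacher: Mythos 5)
Your proposal is correct and follows essentially the same route as the paper: the implication (b) $\Rightarrow$ (a) is Lemma~\ref{lem:interpolation}, and (a) $\Rightarrow$ (b) proceeds via exponential stability upgrading finite-time to infinite-time admissibility, the exponential-input computation of the remark following Lemma~\ref{lem:resolventfractionalspace} to obtain the resolvent condition \eqref{eq:resolventcondition}, and then that lemma itself. Your explicit attention to the convergence of the Bochner integral and the uniformity of the bound over the half-plane is a sound elaboration of what the paper leaves implicit.
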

	The statement in the previous  {theorem} does not hold in the endpoint case $p = \frac{1}{1-\beta}$ and $\alpha=\beta$, as 
    the following result shows.

    \begin{proposition}\label{rem:counterexample_interpolation}
    For any $p\in(1,\infty)$ there exists an analytic semigroup generator on $X=\ell^{2}$ and a $p$\kern+.06em{}-admissible $B:\C\to X_{-1}$
	that is not in $\LL(\C,X_{-1/p'})$. 
    \end{proposition}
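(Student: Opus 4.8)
The plan is to realize the counterexample inside the class of diagonal self-adjoint systems, where $\Ll^p$-admissibility reduces to an explicit quadratic Laplace-transform estimate. Concretely, I would take $X=\ell^2$ with the self-adjoint, negative definite generator $A$ given by $Ae_n=-2^n e_n$ for $n\geq 1$; being bounded above by $-2$, it generates an analytic, exponentially stable semigroup $T(t)e_n=\e^{-2^n t}e_n$, so finite-time and infinite-time $p$-admissibility coincide. For a scalar operator $B=b=(b_n)_n\in\LL(\C,X_{-1})$, computing $\int_0^\infty T(s)Bu(s)\dd s=\sum_n b_n(\LLL u)(2^n)e_n$ shows that infinite-time $p$-admissibility is equivalent to $\sum_n\abs{b_n}^2\abs{(\LLL u)(2^n)}^2\le C\norm{u}_{\Ll^p(0,\infty)}^2$ for all $u$, while $b\in X_{-1/p'}$ is equivalent to $\sum_n\abs{b_n}^2 2^{-2n/p'}<\infty$. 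Writing $\gamma_n:=\abs{b_n}^2 2^{-2n/p'}$, the task becomes to choose $(\gamma_n)$ with $\sum_n\gamma_n=\infty$ (so that $b\notin X_{-1/p'}$) for which the quadratic estimate still holds. I would take $\gamma_n=1/n$, i.e.\ $\abs{b_n}^2=\tfrac1n 2^{2n/p'}$; then $b\in X_{-1}$ and $\sum_n\gamma_n=\sum_n\tfrac1n=\infty$ are immediate.

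The heart of the argument is a uniform estimate on the sampled transforms $v_k:=2^k(\LLL u)(2^k)$ via a dyadic decomposition in time. Setting $E_j:=[2^{-j-1},2^{-j}]$, $u_j:=u\mathbf 1_{E_j}$ and $a_j:=\norm{u_j}_{\Ll^p}$, I would bound each piece by Hölder's inequality, using $\e^{-2^k t}\le\e^{-2^{k-j-1}}$ on $E_j$, to get $2^{-k/p}\abs{v_{k,j}}\le w_{k-j}\,a_j$ with the fixed kernel $w_m:=2^{(m-1)/p'}\e^{-2^{m-1}}$. The crucial structural point is $w\in\ell^1(\Z)$: it decays geometrically as $m\to-\infty$ and super-exponentially as $m\to+\infty$. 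Summing over $j$ yields the convolution bound $2^{-k/p}\abs{v_k}\le(w*a)_k$, and since $\abs{b_n}^2\abs{(\LLL u)(2^n)}^2=\gamma_n\,2^{-2n/p}\abs{v_n}^2$, the admissibility sum is dominated by $\sum_n\gamma_n(w*a)_n^2$.

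It then remains to establish the weighted sequence estimate $\sum_n\gamma_n c_n^2\lesssim\norm{c}_{\ell^p}^2$ for $c=w*a$, since Young's inequality already gives $\norm{w*a}_{\ell^p}\le\norm{w}_{\ell^1}\norm{a}_{\ell^p}=\norm{w}_{\ell^1}\norm{u}_{\Ll^p}$. For $p>2$ I would apply Hölder with the conjugate pair $(p/2,(p/2)')$, observing $(p/2)'=p/(p-2)$ and that $\gamma=(1/n)\in\ell^{p/(p-2)}$ precisely because $p/(p-2)>1$; for $p\le 2$ the embedding $\ell^p\hookrightarrow\ell^2$ together with $\gamma\in\ell^\infty$ suffices. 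Chaining these bounds produces $\sum_n\abs{b_n}^2\abs{(\LLL u)(2^n)}^2\le C\norm{u}_{\Ll^p}^2$, i.e.\ $p$-admissibility, while $\sum_n\gamma_n=\infty$ records that $b\notin X_{-1/p'}$; together with analyticity and exponential stability this gives the claim.

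I expect the regime $p>2$ to be the main obstacle, and it is exactly what forces the logarithmic weight. The tempting ``borderline Carleson'' choice $\gamma_n\equiv1$ (equivalently $\sum_{2^n\le R}\abs{b_n}^2\approx R^{2/p'}$) meets the endpoint only for $p\le2$; for $p>2$ it fails, as the scale-invariant input $u(t)=t^{-1/p}\mathbf 1_{(2^{-M},1)}$ excites all modes $n\le M$ coherently, giving an admissibility sum of order $M$ against $\norm{u}_{\Ll^p}^2\approx M^{2/p}$. The correct balance is the mild thinning $\gamma_n=1/n$: it is still non-summable, so membership in $X_{-1/p'}$ continues to fail, yet it just lies in $\ell^{(p/2)'}$, which is precisely what the weighted $\ell^p\to\ell^2(\gamma)$ embedding requires. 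Checking that this single choice works simultaneously for all $p\in(1,\infty)$---in particular that $w$ stays in $\ell^1$ and that $1/n$ sits on the correct side of every relevant summability threshold---is the delicate part of the proof.
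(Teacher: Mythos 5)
Your proposal is correct, but it takes a genuinely different route from the paper's. The paper treats the same diagonal semigroup ($\lambda_k=-2^k$ on $\ell^2$) yet verifies admissibility by citing the Laplace--Carleson criteria of Jacob, Partington and Pott (\cite[Theorems~3.2, 3.5]{jacob_applications_2014}, restated as Proposition~\ref{thm:JPP_sequences}) as a black box, and it splits into two regimes with two different sequences: $b_k=2^{k/p'}$ (your $\gamma_n\equiv 1$) for $1<p\le 2$, checked against the Carleson-square condition $\mu(Q_I)\lesssim\abs{I}^{2/p'}$, and $b_k=k^{-1/2}2^{k/p'}$ (your $\gamma_n=1/n$) for $p>2$, checked against the $\ell^{p/(p-2)}$ condition on $\{2^{-2n/p'}\mu(S_n)\}_n$. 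You instead prove admissibility from scratch: the diagonal computation reduces it to $\sum_n\abs{b_n}^2\abs{(\LLL u)(2^n)}^2\lesssim\norm{u}_{\Ll^p}^2$, which you obtain from the dyadic-in-time decomposition, the pointwise bound $2^{-k/p}\abs{v_k}\le (w*a)_k$ with $w\in\ell^1(\Z)$ (this uses $p>1$, matching the stated range), Young's inequality, and then Hölder with exponents $(p/2,p/(p-2))$ for $p>2$ or $\ell^p\hookrightarrow\ell^2$ for $p\le 2$. This buys two things: the argument is self-contained, in effect reproving the sufficiency half of the lacunary Laplace--Carleson criterion, and a single weight $\gamma_n=1/n$ serves all $p\in(1,\infty)$ simultaneously, whereas the paper needs the case split because its $p\le2$ choice $\gamma_n\equiv1$ fails for $p>2$---a failure your scale-invariant test input $u(t)=t^{-1/p}\mathbf{1}_{(2^{-M},1)}$ makes explicit and which the $\ell^{p/(p-2)}$ criterion confirms. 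What the paper's route buys is brevity and the identification of the natural borderline Carleson example in the regime $p\le 2$. In a final write-up you should make two small points explicit: the coordinatewise justification of $\int_0^\infty T(s)bu(s)\dd s=\sum_n b_n(\LLL u)(2^n)e_n$ as an $X_{-1}$-valued Bochner integral, and that Young's inequality is applied on $\ell^p(\Z)$ with $\norm{a}_{\ell^p(\Z)}=\norm{u}_{\Ll^p([0,\infty))}$ because the intervals $E_j$ partition $(0,\infty)$ up to a null set.
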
   
	\begin{proof}
    We adapt an example given in
	\cite[Example~5.3.11]{tucsnak_observation_2009} to $p\neq 2$.
	In fact, we let $1 < p < \infty$ be arbitrary. As in \cite[Example 5.3.11]{tucsnak_observation_2009}, we consider the semigroup generated by the diagonal operator $A$, that is, $Ae_{k}=\lambda_{k}e_{k}$, $k\in\mathbb{N}$; with $\{e_{k}\}_{k\in\mathbb{N}}$ referring to the canonical basis and with eigenvalue sequence defined by $\lambda_k = - 2^k$ for $k\in\N$. 
 
 Due to the identification of $\LL(\C,X_{-1})$ with the space $X_{-1}$ itself, the claim amounts to existence of a sequence $b \in X_{-1}$ such that $b$ is $p$\kern+.06em{}-admissible but fails to be an element of $X_{-1/p'}$.
		We consider the cases $1 < p \leq 2$ and $2 < p < \infty$ separately and use
		the appropriate Carleson measure criteria from \cite[Theorem~3.2, Theorem~3.5]{jacob_applications_2014}; see also Theorem~\ref{thm:JPP_sequences} below.

		Let $1 < p \leq 2$ and set $b = \{2^{k/p'}\}_{k\in\N}$. This choice of $b$
		represents an element of $X_{-1}$ as 
		\begin{equation*}
			\sum_{k=1}^\infty {\frac{\abs{b_k}^2}{\abs{\lambda_k}^2}} = \sum_{k=1}^\infty
				2^{2k/p'}\, 2^{-2k} = \sum_{k=1}^\infty 2^{-2k/p} < \infty.
		\end{equation*}
		{ 
        To prove $p$\kern+.06em{}-admissibility using the Carleson measure criterion we need to show that there exists $K > 0$ such that 
		$\mu(Q_I) \leq K \abs{I}^{2/p'}$, where $\mu = \sum_k \,\abs{b_k}^2 \delta_{-\lambda_k}$
		and $Q_I$ is the Carleson square defined for an interval $I$ on
		the imaginary axis symmetric about $0$ (an interval of the form $\I [-a,a]$ for some $a > 0$) by 
		\begin{equation*}
			Q_I \coloneq \{ z\in\C : \I \Im z \in I, 0 <
		\Re z < \abs{I} \},
		\end{equation*}}
  see also Figure~\ref{fig:carleson} below.
  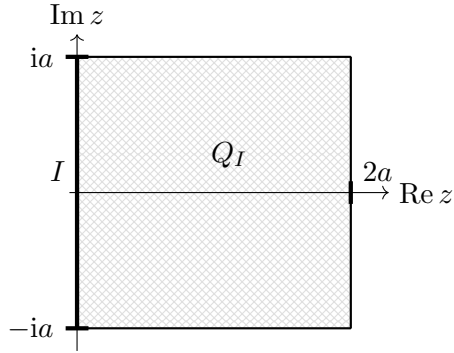
\begin{figure}[ht]
  \centering 
        \begin{tikzpicture}
            \draw[pattern=crosshatch, pattern color=lightgray!50] (0,1.8) rectangle (3.6,-1.8);
            \draw[->] (-0.1,0) -- (4.1,0) node[right]{$\Re z$}
            node[above=0.2cm,midway]{$Q_I$};
            \draw[->] (0,-2.1) -- (0,2.1) node[above]{$\Im z$};
            \draw[ultra thick] (0,-1.8) -- (0,1.8) node[above left,midway]{$I$}; 
            \draw[ultra thick] (0.15,-1.8) -- (-0.15,-1.8) node[left]{$-\I a$};
            \draw[ultra thick] (0.15,1.8) -- (-0.15,1.8) node[left]{$\I a$};
            \draw[thick] (0,-1.8) -- (3.6,-1.8);
            \draw[thick] (0,1.8) -- (3.6,1.8);
            \draw[thick] (3.6,1.8) -- (3.6,-1.8);
            \draw[ultra thick] (3.6,-0.15) -- (3.6,0.15) node[above right, midway]{$2a$};
        \end{tikzpicture}
        \caption{ The Carleson square $Q_I$ for the interval
        $I = \I [-a,a]$}
        \label{fig:carleson}
  \end{figure}
  
		We therefore proceed by showing that there exists $K > 0$ with
		\begin{equation*}
			\sum_{k\in\N} 2^{2k/p'} \delta_{2^k}((0,\abs{I})) \leq K\abs{I}^{2/p'}
		\end{equation*}
		for all choices of $I$ as above. Since $\abs{I} \geq 2^{
		\floor{\log_2 \abs{I}}}$ we set $m = \floor{\log_2 \abs{I}}$
		and get 
			\begin{align*}
				\sum_{k \in \N} 2^{2k/p'} \delta_{2^k}((0,\abs{I}))
				&= \sum_{k=1}^m ( 2^{2/p'} )^{ k} =
				\frac{2^{2/p'}}{2^{2/p'} - 1} ((2^m)^{2/p'} - 1)\\
				&\leq K_{p'} \abs{I}^{2/p'},
			\end{align*}
		which implies $p$\kern+.06em{}-admissibility of $b$.

		Now, if $b$ were in $X_{-1/p'}$, then we would have
		$(-A)^{-1/p'} b \in X$. 
        But due to $( (-A)^{-1/p'} b )_k \equiv 1$, this condition is violated and
		it follows that $b \not\in \LL(\C,X_{-1/p'})$.

		In the case $p > 2$ we consider the same semigroup, but now we let $b$ be the
		control operator represented by  
		$b_k = k^{-1/2}2^{k/p'}$, $k\in\N$. We have $b \in X_{-1}$ since 
			\begin{equation*}
				\sum_{k=1}^\infty \frac{\abs{b_k}^2}{\abs{\lambda_k}^2} = \sum_{k=1}^\infty
				k^{-1} \,2^{2k/p'}\, 2^{-2k} \leq \sum_{k=1}^\infty 2^{-2k/p} < \infty.
			\end{equation*}
		In this case, the Carleson measure criterion reads 
		\begin{equation*}
				\{2^{-2n/p'} \mu(S_n)\}_{n\in\N} \in \ell^{p/(p-2)}(\N)
		\end{equation*}
		where
		$\mu = \sum_{k\in\N} \,\abs{b_k}^2 \delta_{-\lambda_k}$ and 
		$S_n = \{ z \in \C : \Re z \in (2^{n-1}, 2^n] \}$. 

		For this choice of semigroup and control operator the condition means that
			\begin{equation*}
				\sum_{k=1}^\infty \,\bigl|k^{-1/2}\,2^{k/p'}\bigr|^{\frac{2p}{p-2}}\cdot 2^{-
				\frac{2k}{p'}\frac{p}{p-2}} < \infty,
			\end{equation*}
		which holds true here as 
			\begin{equation*}
				\sum_{k=1}^\infty \,\bigl|{k^{-1/2}\, 2^{k/p'}}\bigr|^{\frac{2p}{p-2}} \cdot2^{-
				\frac{2k}{p'}\frac{p}{p-2}} = \sum_{k=1}^\infty {k^{-\frac{p}{p-2}}}
				< \infty.
			\end{equation*}
		However, 
			\begin{equation*}
			\norm{(-A)^{-1/p'} b}_{\ell^2} = \sum_{k=1}^\infty 
			2^{-2k/p'} \,\abs{k^{-1/2}\, 2^{k/p'}}^2 = \sum_{k=1}^\infty k^{-1}
			\end{equation*}
		with a divergent sum; thereby showing that $b$ is unbounded as an operator $\C \to X_{-1/p'}$.
	\end{proof}
	
	\subsection{Example: admissibility for the heat equation via interpolation}\label{sec:dirichletheateq}
    
	As indicated before, we are interested in $\Ll^p$-admissibility for the heat equation on 
	the spatial domain $[0,1]$ with Dirichlet control at the boundary point $1$, given through the system's equations
		\begin{equation*}\left\{
				\begin{aligned} 
					x_t &= x_{\xi\xi} & &\text{in } (0,1) \times (0,\infty),\ \\
					x &= 0 & &\text{on } \{\xi  = 0\} \times (0,\infty), \\
					x &= u & &\text{on } \{\xi  = 1\} \times (0,\infty), \\
					x &= x_0 & &\text{on } (0,1) \times \{t = 0\},
				\end{aligned} \right.
		\end{equation*}
	 with initial heat distribution given by $x_0$ and heat injection 
	with magnitude $u(t)$ at the point $1$.
 
 In this simple example, all properties of the system can basically be derived from the solution formulae given through the spectral decomposition. Yet, it serves as the most natural and well-known example where $2$-admissibility is not satisfied if the state space is chosen to be $X=\Ll^{2}([0,1])$, see also \cite{lasiecka_control_2000}.  We formulate this as a boundary control system by defining
	\[\mathfrak{A} \colon X \supset \mathcal{D}(\mathfrak{A}) \to X,\quad f \mapsto f''\]
    on the 
	domain $\mathcal{D}(\mathfrak{A}) = \{ f \in \Hh^2([0,1]) : f(0) = 0 \}$. The
	abstract boundary operator is then given by \[\mathfrak{B} = \delta_1 \colon X \supset \mathcal{D}(\mathfrak{A}) \to U,\quad
	f \mapsto f(1).\] 
     {The Sobolev embedding theorem (see e.g.\ \cite[Theorem 4.12]{adamsSobolevSpaces2003}) implies} that each $\Hh^2$ function in spatial dimension $n=1$ is continuous, ensuring that $\mathfrak{B}$ is well-defined. 
    We want to translate this system into state space form to make 
	statements about admissibility. Define the operator $A$ by \[A = \mathfrak{A}|_{\ker \mathfrak{B}}\colon 
	\mathcal{D}(A) \to X,\quad f\mapsto f''\]
	on the domain $\mathcal{D}(A) = \{ f \in \Hh^2([0,1]) : f(0) = f(1) = 0 \}$.
	Using the identity \eqref{eq:hilbertcontrolop}---as both $X$ and $U$ are Hilbert spaces---one integrates by parts 
	twice to find that $\mathfrak{B} x \cdot B^*y = - x(1)\,y'(1)$ for all $x \in \mathcal{D}(\mathfrak{A})$ and $y\in \mathcal{D}(
	A)$. Thus we can identify that $B$ is given by
	\[B = \delta'_1 \!\colon \C \to X_{-1},\quad z \mapsto z\delta'_1,\] where $\delta'_1(f) = -f'(1)$.

	To proceed, we consider the orthonormal basis $\{ e_n : n\in\N \}$
	with $e_n(\xi) = \sqrt{2} \sin(n\pi\xi)$. Then the diagonal representation $Ae_n = \lambda_n e_n$ holds with $\lambda_n = -n^2\pi^2$.
	The control operator $B$ can be represented by the sequence $\{ b_n \}_{n\in\N}$ for $ b_n = (-1)^n\sqrt{2}n\pi$ as in \cite[Example~5.1]{Jacob_2018}.
    
	Using the representation of the system in diagonal form, we show that $b = \{b_n\}_{n\in\N}$
	is an element of the interpolation space $X_{-\beta}$ for $\beta > 3/4$. By Lemma~\ref{lem:interpolation}, this then
	will imply that the control operator is $p$\kern+.06em{}-admissible for all $p > \frac{1}{1 - 3/4} =4$. It remains to show that
	\[
		\sum_{n=1}^\infty \frac{\abs{b_n}^2}{\abs{\lambda_n}^{2\beta}} < \infty
	\]
	for all $\beta > \frac 34$. We have
	\begin{equation*}
		\sum_{n\in\N} \frac{\abs{b_n}^2}{\abs{\lambda_n}^{2\beta}} =
		\sum_{n\in\N} \frac{2 n^2 \pi^2}{\pi^{4\beta} 
		n^{4\beta}} = \frac{2}{\pi^{4\beta - 2}} \sum_{k\in\N} \frac{1}{n^{4\beta - 2}},
	\end{equation*}
	which is finite for  $\beta > \frac 34$. Invoking Lemma~\ref{lem:interpolation} then implies $p$\kern+.06em{}-admissibility for all $p > 4$.
    \begin{remark}
	One can also use the interpolation space argument to show that the Dirichlet controlled
	heat equation is admissible for $p > 4$ in multiple spatial dimensions. To show this, we follow \cite{schwenninger_input--state_2020} and 
	fix a dimension $n \geq 2$ and a domain $\Omega \subset \R^n$ with sufficiently smooth 
	boundary $\partial \Omega$. The heat equation with Dirichlet boundary control is represented by the system
	\begin{equation*}\left\{
        \begin{aligned}
		\dot x(t) &= \Delta x(t) \\
		\gamma_0 x &= u,\\
		x(0) &= x_0 \in X
        \end{aligned}
        \right.
	\end{equation*}
	on $X=\Ll^2(\Omega)$ with input space $U = \Ll^2(\partial \Omega)$, where 
    $\gamma_0$ is the Dirichlet trace operator on $\partial \Omega$. Relating the above to abstract boundary control systems, we set 
	\begin{equation*}
		\mathfrak{A} = \Delta,\qquad \mathcal{D}(\mathfrak{A}) =\Hh^2(\Omega) + D\Ll^2(\partial\Omega),\qquad
		\mathfrak{B} = \gamma_0,
	\end{equation*}
	where  {$D \in \LL(\Ll^2(\partial \Omega),\Ll^2(\Omega))$ denotes the Dirichlet map which maps $g \in \Ll^2(\partial \Omega)$ to the unique solution $h \in \Ll^2(\Omega)$ of 
 \[
        \Delta h = 0,\qquad \gamma_0 h = g;
 \]
 see 
 \cite[Proposition~10.6.6]{tucsnak_observation_2009}. Since by 
 \cite[Proposition~10.6.4]{tucsnak_observation_2009}} the composition $\gamma_0 D$ yields the identity on $\Ll^2(\partial\Omega)$, we have $\ker \mathfrak{B} = 
	\Hh^2(\Omega) \cap \Hh^1_0(\Omega)$. The general rewriting procedure for boundary control systems leads to the self-adjoint operator
	\begin{equation*}
	A = \mathfrak{A}|_{\ker\mathfrak{B}} = \Delta,\qquad \mathcal{D}(A) = \Hh^2(\Omega) \cap \Hh^1_0(\Omega).
	\end{equation*}
    As $X$ and $U$ are Hilbert spaces, we use
	the formula \eqref{eq:hilbertcontrolop} and Green's first identity to deduce that 
	\begin{equation*}
		\spr{\mathfrak{A}x}{y}_X - \spr{x}{Ay}_X = \spr{\mathfrak{B}x}{B^*y}_U 
    = - \spr{\mathfrak{B}x}{\tfrac{\partial y}{\partial \nu}}_{U},
	\end{equation*}
	whereupon we find that $B^*y$ is the normal derivative operator 
    $\frac{\partial y}{\partial \nu}$ on $\partial \Omega$. By \cite[Lemma~3.1.1]{lasiecka_control_2000}, 
    $B^*$ is bounded
	 $\Hh^\beta(\Omega) \cap \Hh^1_0(\Omega) \to \Ll^2(\partial\Omega)$ 
    for all $\beta > \frac 32$.
	Hence, for all $\beta > 3/4$ this yields \[B^*\in \LL(X_{\beta}, U^*) = \LL(\Hh^{2\beta}(\Omega) \cap \Hh^1_0(\Omega), \Ll^2(\partial \Omega)), \qquad B^*\colon f \mapsto \biggl.\frac{\partial f}{\partial \nu}\biggr|_{\partial\Omega}.\] 
    	By duality---as $X = \Ll^2(\Omega)$ is reflexive---we conclude that $(-A)^{-\beta}B$
	is bounded as an operator $U \to X$ for all $\beta > 3/4$. Lemma~\ref{lem:interpolation} then implies that $B$ is $p$\kern+.06em{}-admissible for all
	$p>4$.
    \end{remark}
	\begin{remark}[{Notes on Neumann control}]
    Although it does not appear using the notion of \emph{admissibility} explicitly,
    we remark that the heat equation on the state space $X = \Ll^2(\Omega)$
    with Neumann control acting 
    via the boundary---and inputs in $U = \Ll^2(\partial \Omega)$---is featured in the book by Lasiecka and Triggiani
    \cite[Remark~3.3.1.4]{lasiecka_control_2000} (and the result is folklore). There the control operator $B$
    is computed and shown to satisfy $(-A)^{-(1/4 + \epsilon)} B \in \LL(U,X)$
    for any $\epsilon > 0$. This allows
    the application of the interpolation space argument as in Section~\ref{sec:interpolation}.
    
    Furthermore, $p$\kern+.06em{}-admissibility of the Neumann controlled heat 
    equation is characterized by an embedding argument 
    due to Haak and Kunstmann, see \cite[Proposition~2.4]{haakWeightedAdmissibilityWellposedness2007}.
    We also refer to Byrnes, Gilliam, Shubov and Weiss \cite{byrnesRegularLinearSystems2002}
    for the case $p = 2$.
    Denoting the Neumann Laplacian on $\Omega \subset \R^n$ by $\Delta_{\mathrm{N}}$, it follows from
    Haak and Kunstmann's results that
    the control operator $B = \gamma_0'$ with input space $\Ll^2(\partial \Omega)$---resulting from the Neumann controlled heat equation
    on $X = \Ll^2(\Omega)$---is $p$\kern+.06em{}-admissible for $A = -\Delta_{\mathrm{N}} + I$ if
    and only if $p \geq \frac 43$. This is a special case of a more general result
    they obtain in the context of weighted admissibility
    on state spaces such as $\Ll^q(\Omega)$.
    In the unweighted case that we consider here, the result yields $p$\kern+.06em{}-admissibility whenever $U$ embeds into the Besov space $\mathrm{B}_{q,\infty}^{2/p - 1 - 1/q}(\partial \Omega)$.  {We employ the Besov space embedding\[
    \mathrm{B}_{2,2}^{0} \hookrightarrow \mathrm{B}_{2,\infty}^{-s}, \qquad s \geq 0,
    \]
    see e.g.\ \cite[Section~2.3.2, Remark~2]{triebelTheoryFunctionSpaces1992} or \cite[Section~2.3]{schneiderSobolevBesovRegularity2021a}, such that }in the case of 
    the Neumann controlled heat equation on $X = \Ll^2(\Omega)$ with $U = \Ll^2(\partial \Omega)$ one arrives at the condition
    \[
        U = \mathrm{B}_{2,2}^0(\partial \Omega) \hookrightarrow \mathrm{B}^{2/p - 3/2}_{2,\infty}
        (\partial \Omega)
    \]
    which is satisfied if and only if $p \geq \frac 43$, as claimed.
    \end{remark}
	\section{Characterizations for multiplier-like systems}
	We are interested in testing $p$\kern+.06em{}-admissibility also in cases where the system is not necessarily of 
	diagonal form. In this section, we formulate extensions of
	Laplace--Carleson type results to normal semigroups and multiplication 
	semigroups.
    For this, recall the following result due to Jacob, Partington and Pott for diagonal semigroups.
    \begin{theorem}[{\cite[Theorem~2.1]{jacob_applications_2014}}]\label{thm:JPP_L-C}
    Let $A$ generate a diagonal semigroup on $X=\ell^{q}(\Z)$\footnote{More generally, one can assume that the semigroup is diagonal with respect to a $q$-Riesz basis on an arbitrary Banach space $X$.} with eigenvalues given by $\{\lambda_{k}\}_{k\in\mathbb{Z}}$. Let $p\in[1,\infty)$. Then $B=\{b_{k}\}_{k\in\Z}\in \LL(\C,X_{-1})$ is infinite-time $\Ll^{p}$-admissible if and only if the Laplace transform 
    \begin{equation*}
        \LLL\colon \Ll^p([0,\infty)) \to \Ll^q(\C_+,\mu),\quad f\mapsto \left(z\mapsto\int_{0}^{\infty}\e^{-zt}f(t)\,\mathrm{d}t\right)
    \end{equation*}
    is a bounded operator for the measure
   \[\mu = \sum_{k\in\Z} \,\abs{b_k}^q \delta_{-\lambda_k}.
   \]
    \end{theorem}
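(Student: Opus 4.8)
The plan is to reduce the admissibility estimate to an exact identity between the $X$-norm of the input map and a weighted $\Ll^q$-norm of the Laplace transform, exploiting only the diagonal structure of the semigroup. Recall from the reformulation of infinite-time admissibility in the remark following the definition that infinite-time $\Ll^p$-admissibility of $B$ is precisely the boundedness of the map
\[
\Phi\colon \Ll^p([0,\infty)) \to X,\qquad \Phi u = \int_0^\infty T(s) B u(s)\dd s,
\]
where a priori this is an $X_{-1}$-valued Bochner integral. The entire argument then hinges on computing the coordinates of $\Phi u$ relative to the diagonalizing canonical basis $\{e_k\}_{k\in\Z}$.

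First I would test $\Phi u$ against the $k$-th coordinate functional $e_k^*$. On the diagonal extrapolation scale this functional is bounded on $X_{-1}$---its norm being comparable to $\abs{s-\lambda_k}$ for the $s\in\rho(A)$ fixing the $X_{-1}$-norm---so it may be interchanged with the Bochner integral. Since $B$ is represented by $\{b_k\}$ and the extrapolated semigroup still acts diagonally, one has $\spr{T(s)B}{e_k^*}=b_k\e^{\lambda_k s}$, whence
\[
(\Phi u)_k = b_k \int_0^\infty \e^{\lambda_k s} u(s)\dd s = b_k\,(\LLL u)(-\lambda_k),
\]
where the last equality uses $\Re(-\lambda_k)>0$, so that the integral is exactly the Laplace transform of $u$ evaluated at the point $-\lambda_k\in\C_+$; note that $\LLL u$ is defined pointwise on the open right half-plane because $\e^{-z(\,\cdot\,)}\in\Ll^{p'}([0,\infty))$ there.

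Summing the $q$-th powers over $k$ then produces the key norm identity
\[
\norm{\Phi u}_X^q = \sum_{k\in\Z} \abs{b_k}^q \abs{(\LLL u)(-\lambda_k)}^q = \int_{\C_+} \abs{(\LLL u)(z)}^q \dd\mu(z) = \norm{\LLL u}_{\Ll^q(\C_+,\mu)}^q,
\]
which holds with values in $[0,\infty]$ and follows directly from the definition of $\mu$ as the weighted sum of point masses at the points $-\lambda_k$. Both implications of the theorem are then immediate from this single identity: $\Phi$ is bounded $\Ll^p\to X$ exactly when $\LLL$ is bounded $\Ll^p\to \Ll^q(\C_+,\mu)$, with the same operator norm, and in particular $\Phi u$ lands in $X=\ell^q$ precisely when $\LLL u\in\Ll^q(\C_+,\mu)$.

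The step I expect to require the most care is the rigorous justification of the coordinatewise computation: that coordinate evaluation genuinely commutes with the $X_{-1}$-valued integral, that the extrapolated semigroup is diagonal on $X_{-1}$, and that the resulting scalar integral is indeed the Laplace transform at $-\lambda_k$ (which forces the support $\{-\lambda_k\}$ of $\mu$ to lie in $\C_+$, i.e. $\Re\lambda_k<0$). For the $q$-Riesz basis generalization mentioned in the footnote, the same computation applies after expanding in the $q$-Riesz basis and transporting the norm to $\ell^q$ through the basis isomorphism; this only perturbs the admissibility constant by the equivalence constants of the basis and leaves the qualitative equivalence untouched.
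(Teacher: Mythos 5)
Your proposal is correct and takes essentially the same route as the paper: the paper obtains this statement (quoted from Jacob--Partington--Pott) as the counting-measure special case of its multiplication-semigroup result, Lemma~\ref{lem:lapladm}, whose proof is precisely your exact norm identity $\norm{\Phi u}_X^q=\norm{\LLL u}_{\Ll^q(\C_+,\mu)}^q$, derived there by writing out the $\ell^q$-norm coordinatewise (your pairing with coordinate functionals $e_k^*$ is only a presentational variant of this). Both arguments exploit the diagonal action of $T_{-1}(s)$ to convert admissibility, with the same constant, into boundedness of the Laplace--Carleson embedding.
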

    Note that by using the special form of a diagonal semigroup the proof of the above result is rather direct and the difficulty in assessing admissibility only transfers to checking the boundedness of the Laplace--Carleson embedding. This is where arguments from harmonic analysis, and in particular from the theory of Carleson measures,  {take effect}---at least under further assumptions on the range of $p$ and the location of the eigenvalues (in sectors or strips). The latter situations reflect whether the diagonal semigroup is analytic or even a 
    group. 
    In those cases, admissibility indeed reduces to elementary conditions on certain sequences lying in corresponding $\ell^{r}$-spaces, such as employed in Section~\ref{sec:interpolation} above and Section~\ref{sec:4admissibility} below.
    
    For convenience, we recall the admissibility results for 
    diagonal semigroups on $\ell^q$ spaces from \cite{jacob_applications_2014}.
    \begin{proposition}[{\cite[Theorem~3.2, Theorem~3.5]{jacob_applications_2014}}]\label{thm:JPP_sequences}
        Fix $1 \leq q < \infty$ and assume that $A\colon \ell^q(\Z) \supset \mathcal{D}(A) \to \ell^q(\Z)$ generates a diagonal semigroup on $X = \ell^q(\Z)$ with eigenvalues
        $\{\lambda_k\}_{k\in\Z}$ lying in the left half plane such that $-\lambda_k \in S_\theta$ for some angle $\theta \in (0,\pi/2)$ and all $k \in \Z$. Consider the operator $B \in \LL(\C,X_{-1})$ represented by the sequence $\{ b_k \}_{k\in\N}$. 

        { 
        If $1 < p \leq q$, then the following statements are equivalent:
        \begin{enumerate}[ leftmargin=3\parindent,rightmargin=3\parindent]
        \item $B$ is $p$\kern+.06em{}-admissible.
        \item There exists $K > 0$ such that \[\mu(Q_{I}) \leq K\abs{I}^{q/p'}\] holds for all intervals $I \subset \I \R$ symmetric about $0$; i.e.\ intervals of the form $\mathrm{i} [-a,a]$ for some $a>0$, where $Q_I \coloneq \{ z\in\C : \I \Im z \in I, 0 <
		\Re z < \abs{I} \}$.
        \item There exists $K > 0$ such that $\norm{R(z,A)B}_X \leq K z^{-1/p}$ for all $z > 0$.
        \end{enumerate}
        Here $Q_{I} = \{ z\in\C : \I \Im z \in I, 0 <
		\Re z < \abs{I} \}$.
  
        If instead $q < p < \infty$, the following are equivalent:
        \begin{enumerate}[leftmargin=3\parindent,rightmargin=3\parindent]
        \item $B$ is $p$\kern+.06em{}-admissible.
        \item $\{ 2^{-nq/p'} \mu(S_n) \}_{k\in\Z} \in \ell^{p/(p-q)}(\Z)$.
        \item $\{ 2^{n/p} \norm{R(2^n,A)B}_{\ell^{q}} \}_{k\in\Z} \in \ell^{p/(p-q)}(\Z)$.
        \end{enumerate}
        Here $\mu = \sum_{k\in\N} \,\abs{b_k}^2 \delta_{-\lambda_k}$ and 
		$S_n = \{ z \in \C : \Re z \in (2^{n-1}, 2^n] \}$}.
    \end{proposition}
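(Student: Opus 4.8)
The plan is to reduce the whole statement to the boundedness of a single Laplace--Carleson embedding and then to invoke the corresponding harmonic-analytic characterizations. By Theorem~\ref{thm:JPP_L-C}, infinite-time $\Ll^p$-admissibility of $B$ (which is the notion meant in~(a)) is equivalent to boundedness of $\LLL\colon \Ll^p([0,\infty)) \to \Ll^q(\C_+,\mu)$ with $\mu = \sum_k \abs{b_k}^q\delta_{-\lambda_k}$, and by hypothesis $\operatorname{supp}\mu \subset S_\theta$. It therefore suffices to characterize boundedness of this embedding through the box/summability conditions in~(b) and to relate it to the resolvent condition in~(c). The sectoriality of $\operatorname{supp}\mu$ is exactly what collapses the relevant family of test regions to the one-parameter families anchored at the origin, namely the Carleson squares $Q_I$ and the dyadic strips $S_n$.

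The implications (a)$\Rightarrow$(c) and (a)$\Rightarrow$(b) I would obtain by testing the embedding on explicit inputs. Taking $f_\sigma = \e^{-\sigma(\,\cdot\,)}$ gives $\LLL f_\sigma(z) = (z+\sigma)^{-1}$ and $\norm{f_\sigma}_{\Ll^p} \asymp \sigma^{-1/p}$, while $\norm{\LLL f_\sigma}_{\Ll^q(\mu)}^q = \sum_k \abs{b_k}^q\abs{\sigma-\lambda_k}^{-q} = \norm{R(\sigma,A)B}_{\ell^q}^q$; boundedness of $\LLL$ then immediately yields the resolvent bound $\norm{R(\sigma,A)B}_{\ell^q}\leq K\sigma^{-1/p}$ of~(c). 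Choosing instead $f = \chi_{[0,1/\abs{I}]}$, a short computation shows $\abs{\LLL f}\gtrsim \abs{I}^{-1}$ uniformly on $Q_I$, so boundedness forces $\abs{I}^{-q}\mu(Q_I) \lesssim \norm{f}_{\Ll^p}^q = \abs{I}^{-q/p}$, which rearranges to $\mu(Q_I)\lesssim\abs{I}^{q/p'}$; the analogous test localized on each strip $S_n$ produces the termwise bounds entering the $\ell^{p/(p-q)}$-condition in the regime $p>q$.

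The converse implications are the analytic core and split according to the two regimes. For $1<p\leq q$ one must prove a genuine Carleson embedding theorem: from $\mu(Q_I)\leq K\abs{I}^{q/p'}$ deduce boundedness of $\LLL$. I would decompose $\C_+$ into dyadic Carleson boxes, bound $\abs{\LLL f(z)}$ on each box by an average (or maximal function) of $f$ at the matching scale, and sum using the box condition together with the sectorial localization of $\operatorname{supp}\mu$; the constraint $p\leq q$ is precisely what lets such a packing estimate close. For $q<p<\infty$ the embedding is of the opposite type and the natural device is duality: writing $\norm{\LLL f}_{\Ll^q(\mu)}^q = \sum_n \int_{S_n}\abs{\LLL f}^q\dd\mu$ and applying H\"older in $n$ with the conjugate exponents $p/q$ and $p/(p-q)$, the estimate reduces to controlling the contribution on each strip by $\mu(S_n)$ times a power of a localized $\Ll^p$-norm of $f$ (the kernel $\e^{-zt}$ with $\Re z\approx 2^n$ effectively averaging $f$ at scale $2^{-n}$), which is summable exactly under the stated $\ell^{p/(p-q)}$-condition. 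This $p>q$ regime is where I expect the main difficulty: identifying the correct localization of $f$ at each scale and justifying the interchange of the sum over $n$ with the $\Ll^p$-norm are the delicate points.

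It remains to match the measure conditions~(b) with the resolvent conditions~(c) directly. Since $\norm{R(z,A)B}_{\ell^q}^q = \sum_k \abs{b_k}^q\abs{z-\lambda_k}^{-q}$ for $z>0$ and each $-\lambda_k\in S_\theta$, the summand $\abs{z-\lambda_k}^{-q}$ is comparable to $z^{-q}$ when $\abs{\lambda_k}\lesssim z$ and decays like $\abs{\lambda_k}^{-q}$ otherwise; hence the resolvent norm is governed, up to constants depending only on $\theta$, $p$ and $q$, by the $\mu$-mass of the boxes (respectively strips) of scale comparable to $z$. Comparing $z^{-q}\mu(Q_I)$ with $z^{-q/p}=z^{q/p'-q}$ recovers the equivalence of the pointwise bound in~(c) with the box condition in~(b), and the same comparison over dyadic scales yields the equivalence of the $\ell^{p/(p-q)}$-conditions. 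Assembling the testing arguments, the two Carleson embedding theorems, and this comparison gives all the asserted equivalences.
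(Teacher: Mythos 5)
Two remarks before the substance. First, the paper contains no proof of this proposition: it is recalled verbatim from \cite[Theorems~3.2 and~3.5]{jacob_applications_2014}, with the reduction step quoted separately as Theorem~\ref{thm:JPP_L-C}, so the only meaningful comparison is with the proof in that cited source. Your architecture---reduce admissibility to boundedness of $\LLL\colon\Ll^p([0,\infty))\to\Ll^q(\C_+,\mu)$ via Theorem~\ref{thm:JPP_L-C}, then characterize the embedding exploiting that $\operatorname{supp}\mu$ lies in a sector---is exactly the route taken there. Your testing computations are correct: the exponential input yields the resolvent bound, and the input $\chi_{[0,1/\abs{I}]}$ yields the box condition; also correct is your silent replacement of the paper's $\abs{b_k}^2$ by $\abs{b_k}^q$ in the definition of $\mu$, which is what the reduction via Theorem~\ref{thm:JPP_L-C} forces (the exponent $2$ in the printed statement is a leftover from the case $q=2$).

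Nevertheless the proposal has genuine gaps. (1) The sufficiency halves \emph{are} the theorem: that the box condition implies boundedness of $\LLL$ when $p\le q$, and that the $\ell^{p/(p-q)}$ condition does so when $q<p$, are precisely the Laplace--Carleson embedding theorems that form the analytic core of \cite{jacob_applications_2014}; your sketches (\enquote{the packing estimate closes}, \enquote{summable exactly under the stated condition}) restate what must be proved rather than prove it. (2) Your necessity argument for (b) in the regime $q<p$ fails as described: testing $\LLL$ on a single function localized at scale $2^{-n}$ gives only the uniform bound $\sup_n 2^{-nq/p'}\mu(S_n)<\infty$, i.e.\ an $\ell^\infty$ statement, which for $p>q$ is strictly weaker than the asserted $\ell^{p/(p-q)}$ summability. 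That summability requires a multi-scale test, e.g.\ pairing the sequence $\{2^{-nq/p'}\mu(S_n)\}_n$ against an arbitrary element of the unit ball of $\ell^{p/q}$ and testing $\LLL$ on a corresponding superposition $\sum_n c_n\e^{-2^n(\,\cdot\,)}$; nothing of this kind appears in your plan.

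(3) Your final matching of (b) and (c) in the regime $q<p$ is wrong as claimed. Since $\norm{R(2^n,A)B}_{\ell^q}^q\asymp\sum_m\mu(S_m)(2^n+2^m)^{-q}$, the resolvent norm enters raised to the power $q$; writing $s_m=2^{-mq/p'}\mu(S_m)$, the dyadic comparison you describe is a Schur-type bound for the kernel $2^{nq/p}\,2^{mq/p'}(2^n+2^m)^{-q}$ (geometric decay off the diagonal, comparable to $1$ on it), and it shows that condition (b) is equivalent to $\{2^{n/p}\norm{R(2^n,A)B}_{\ell^q}\}_n\in\ell^{qp/(p-q)}$, \emph{not} $\ell^{p/(p-q)}$; for $q>1$ the latter is strictly stronger (take $s_n\asymp n^{-\alpha}$ with $\frac{p-q}{p}<\alpha\le\frac{q(p-q)}{p}$). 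This in fact exposes a typo in the proposition as printed: in \cite[Theorem~3.5]{jacob_applications_2014} the resolvent condition carries the exponent $2p/(p-2)$, i.e.\ $qp/(p-q)$ with $q=2$. A careful execution of your own comparison would have surfaced this discrepancy; instead the proposal asserts an equivalence that does not hold for the statement as written.
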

    Thus, in this section we are looking for (more general) measures $\mu$ such that the equivalence
	\begin{equation*}
		\LLL \in \LL(\Ll^p([0,\infty)),\Ll^q(\C_+,\mu))
		\Leftrightarrow 
		\norm*{\int_0^\infty T(s)Bu(s) \dd s}_X \leq C \norm{u}_{\Ll^p([0,\infty))}
	\end{equation*}
	holds for normal semigroups and multiplication semigroups.
    \subsection{Non-admissibility for a Dirichlet controlled heat equation}
    \label{sec:4admissibility}
	As an application of the Laplace--Carleson method from \cite{jacob_applications_2014} we want to show that the Dirichlet controlled heat equation in one spatial 
	dimension as above is not $\Ll^p$-admissible for any $p\leq4$. 
 
    It is enough to show the claim for $p=4$. To show that Dirichlet boundary control does not lead to a $4$-admissible control operator for this system, we apply the necessary and sufficient condition given by the Laplace--Carleson
    method from \cite[Theorem~3.5]{jacob_applications_2014}, as above in Proposition~\ref{rem:counterexample_interpolation} and  {Proposition}~\ref{thm:JPP_sequences}. Keeping the same notation, i.e.\ $\mu = \sum_{k\in\N} \,\abs{b_k}^2 \delta_{-\lambda_k}$ and 
		$S_n = \{ z \in \C : \Re z \in (2^{n-1}, 2^n] \}$, we insert $p = 4$ into the condition,
    which yields
    \[
        \text{$B$ $4$-admissible} \Leftrightarrow s \coloneq \{ 2^{-3n/2} \mu(S_n) \}_{n\in\Z} \in \ell^2(\Z).
    \]
    In showing that $s$ is not square-summable, we use the following expression for $\mu(S_n)$: Recall that $\lambda_k = -k^2\pi^2$ (\cite[Example~5.1]{Jacob_2018}). For $n \in \Z$ we have
		\begin{equation}
		\begin{aligned}
			2^{-2n/p'} \mu(S_n) &= 2^{-2n/p'} \sum_{k\in\N}\,\abs{\sqrt{2} k \pi}^2 
			\delta_{k^2 \pi^2}
			(\{ z \in \C : \Re z \in (2^{n-1},2^n]\})\\
			&= 2^{1-2n/p'} \pi^2 \sum_{\crampedclap{k \in \N \cap \bigl(\!\frac{\sqrt{2^{n-1}}}{\pi}
			, \frac{\sqrt{2^{n}}}{\pi}
			\bigr]}} k^2. 
   \end{aligned}
   \label{eq:muS_n_first}    
		\end{equation}
     Combining \eqref{eq:muS_n_first} with the fact that $k \mapsto k^2$ is an increasing function on $[0,\infty)$, we estimate
    \[
     \mu(S_n) = 2\pi^2 \sum_{\crampedclap{k \in \N \cap \bigl(\!\frac{\sqrt{2^{n-1}}}{\pi}
			, \frac{\sqrt{2^{n}}}{\pi}
			\bigr]}} k^2 \geq  2\pi^2 \int_{\floor{{2^{(n-1)/2}}/{\pi}}}^{\floor{{2^{n/2}}/{\pi}}} k^2 \dd k = \frac{2\pi^2}{3} \Biggl( \floor*{\frac{2^{n/2}}{\pi}}^3 \!\!- \floor*{\frac{2^{(n-1)/2}}{\pi}}^3\,\Biggr).
    \]
    Then we get
    \[
        \mu(S_n) \geq \frac{2\pi^2}{3} \Biggl( \floor*{\frac{2^{n/2}}{\pi}}^3 \!\!- \floor*{\frac{2^{(n-1)/2}}{\pi}}^3\, \Biggr)
    \]
    which, in combination with the estimate $\floor{b}^3-\floor{a}^3 \geq
    (b-1)^3 - a^3$, gives
    \[
        \mu(S_n) \geq \frac{2\pi^2}{3}\biggl( \frac{(1-2^{-3/2})}{\pi^3}2^{{3n}/{2}} - \frac{3}{\pi^2}2^{{2n}/{2}} + \frac{3}{\pi}2^{{n}/{2}} - 1\biggr).
    \]
    Multiplying both sides with the prefactor $2^{-3n/2}$, we 
    have 
    \[
        2^{-{3n}/{2}}\mu(S_n) \geq \frac{2\pi^2}{3}\biggl(\frac{(1-2^{-3/2})}{\pi^3} - \frac{3}{\pi^2}2^{-{n}/{2}} + \frac{3}{\pi}2^{-{2n}/{2}} - 2^{-{3n}/{2}}\biggr),
    \]
    where the expression on the right-hand side does not converge to zero in the limit as  $n\to \infty$.
    Therefore the sequence $s$ is not square-summable and consequently $B$ is not $4$-admissible.
    
    \subsection{Normal semigroups}
	\begin{proposition}[Laplace--Carleson embeddings for normal semigroups]
 \label{prop:L-C_normal}
	Let $A$ generate a strongly continuous semigroup of normal operators
	on a Hilbert space $X$. Denote the spectral measure of $A$ by $E$ and the spectral measure of the extension $A_{-1}$ by $E_{-1}$.
    Consider a control operator $b \in X_{-1} \simeq 
	\LL(\C,X_{-1})$ and define the measure 
 \begin{equation}
 \mu(S) \coloneq \int_S (1 + \abs{z}^2)\dd \spr{E_{-1}(z)b}{b}_{X_{-1}} = (1 + \abs{z}^2) (E_{-1})_{b,b}(S)
 \end{equation} 
 on Borel subsets $S$ of $\C$. 
 
 Then $b$ is $\Ll^p$-admissible for $T$ if and only if the
	\kern-.06em{}Laplace transform is a bounded operator when considered as a map
	\begin{equation*}
		\LLL \colon \Ll^p([0,\infty)) \to \Ll^2(\sigma(-A),\mu).
	\end{equation*}
	\end{proposition}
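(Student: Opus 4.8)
The plan is to mirror the argument of Theorem~\ref{thm:JPP_L-C}, replacing the diagonal (discrete) spectral sum by the spectral integral furnished by the spectral theorem for the normal operator $A_{-1}$ on the Hilbert space $X_{-1}$. Recall that (after the usual reduction to an exponentially stable semigroup, which moves $\sigma(-A)$ into the open right half-plane) infinite-time $\Ll^p$-admissibility of $b$ is equivalent to the estimate $\norm{\Phi_\infty u}_X \le C\norm{u}_{\Ll^p}$ for $\Phi_\infty u := \int_0^\infty T(s) b\, u(s)\dd s$, a priori an $X_{-1}$-valued Bochner integral. Writing the extended semigroup through its spectral representation $T(s)b = \int_{\sigma(A)} \e^{sz}\dd E_{-1}(z)\,b$, I would first establish the identity
\begin{equation*}
\Phi_\infty u = \int_{\sigma(A)} (\LLL u)(-z)\dd E_{-1}(z)\,b,
\end{equation*}
where $(\LLL u)(-z) = \int_0^\infty \e^{sz} u(s)\dd s$ is the Laplace transform evaluated at $-z \in \sigma(-A)$. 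Since $\Re z<0$ on $\sigma(A)$ in the stable case, this integral converges, and the substitution $w=-z$ identifies $\sigma(A)$ with $\sigma(-A)$ and carries the measure to the one on $\sigma(-A)$ appearing in the statement.

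The second step is to compute the $X$-norm of $\Phi_\infty u$. Setting $g(z):=(\LLL u)(-z)$, the vector $v:=g(A_{-1})b = \int g\dd E_{-1}\,b$ lies in $X_{-1}$ and satisfies $\dd(E_{-1})_{v,v} = \abs{g}^2\dd(E_{-1})_{b,b}$. The structural fact I would invoke is the relation between the inner products of $X$ and $X_{-1}$ under the functional calculus: for $x,y\in X$,
\begin{equation*}
\dd\spr{E(z)x}{y}_X = (1+\abs{z}^2)\dd\spr{E_{-1}(z)x}{y}_{X_{-1}},
\end{equation*}
which holds because passing from the $X_{-1}$- to the $X$-inner product corresponds, under the spectral calculus of the normal operator, to multiplication by the weight $1+\abs{z}^2$; this is exactly the weight built into $\mu$ and reflects the Hilbertian norm on $X_{-1}$ (the one induced by $(I+A^*A)^{-1/2}$, equivalent to the norm used earlier). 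Consequently $v\in X$ if and only if $\int(1+\abs{z}^2)\abs{g}^2\dd(E_{-1})_{b,b}<\infty$, in which case
\begin{equation*}
\norm{\Phi_\infty u}_X^2 = \int_{\sigma(A)}\abs{(\LLL u)(-z)}^2\dd\mu(z) = \norm{\LLL u}_{\Ll^2(\sigma(-A),\mu)}^2.
\end{equation*}
With this isometric identity in hand the asserted equivalence is immediate: $\norm{\Phi_\infty u}_X \le C\norm{u}_{\Ll^p}$ holds for all $u$ precisely when $\LLL$ is bounded from $\Ll^p([0,\infty))$ into $\Ll^2(\sigma(-A),\mu)$, with the same constant.

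The main obstacle will be the rigorous justification of the first step, namely interchanging the Bochner integral over $[0,\infty)$ with the operator-valued spectral integral for a vector $b$ lying only in $X_{-1}$. I would handle this by scalarization: applying the isometric isomorphism $R(s,A_{-1})\colon X_{-1}\to X$ (which commutes with the spectral integral) and pairing with an arbitrary $y\in X$ reduces the vector identity to a scalar Fubini theorem for the kernel $\e^{sz}(s-z)^{-1}$ integrated against $u(s)\dd s$ and the complex measure $\spr{E_{-1}(z)b}{y}_{X_{-1}}$. The required absolute integrability is guaranteed by the finite total mass $(E_{-1})_{b,b}(\C)=\norm{b}_{X_{-1}}^2<\infty$, the boundedness of $(s-z)^{-1}$ on $\sigma(A)$, and the exponential decay $\e^{s\Re z}$, so that Hölder's inequality with $u\in\Ll^p$ and $\e^{-\delta s}\in\Ll^{p'}$ closes the estimate. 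A secondary technical point—the behaviour of $\LLL u$ on $\I\R\cap\sigma(-A)$ for non-stable semigroups—is circumvented by the same reduction to the exponentially stable case employed in the proofs of Lemmas~\ref{lem:interpolation} and~\ref{lem:resolventfractionalspace}, which places $\sigma(-A)$ in the open right half-plane where $\LLL u$ is genuinely defined $\mu$-almost everywhere.
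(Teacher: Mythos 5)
Your central computation is sound and is, in essence, the paper's own argument in slightly different packaging: the paper converts the $X$-norm of $\int_0^\infty T(s)bu(s)\dd s$ into an $X_{-1}$-norm by applying the isometry $(rI-A_{-1})\colon X\to X_{-1}$, uses the spectral-calculus identity $\norm{(\int f\dd E_{-1})b}_{X_{-1}}^2=\int\abs{f}^2\dd(E_{-1})_{b,b}$, and then invokes $\abs{r-z}^2\asymp 1+\abs{z}^2$ on $\sigma(A)$; you instead build the weight $1+\abs{z}^2$ directly into the relation between $E$ and $E_{-1}$. Be aware that with the paper's definition of the $X_{-1}$-norm (via $R(r,A)$) the \emph{exact} weight is $\abs{r-z}^2$, so your ``isometric identity'' is exact only for the equivalent graph-type norm induced by $(I+A^*A)^{-1/2}$ and is a two-sided estimate with respect to the $\mu$ of the statement --- which is all that boundedness requires, and which uses precisely the same comparability fact as the paper. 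Your scalarization/Fubini justification and your care about when $g(A_{-1})b$ actually lies in $X$ are points the paper glosses over, and they are handled correctly (modulo the clash of the letter $s$ as time variable and resolvent point).

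The genuine flaw is the ``usual reduction to an exponentially stable semigroup,'' which you invoke both at the start and as the fix for $\LLL u$ on $\I\R\cap\sigma(-A)$. Here ``admissible for $T$'' means \emph{infinite-time} admissibility (the paper's standing convention), and that notion is \emph{not} invariant under replacing $A$ by $A-\delta I$: for $X=\C$, $A=0$, $b=1$, the operator $b$ is not infinite-time $\Ll^p$-admissible for any $p>1$, yet it is for every shifted generator $A-\delta I$. The measure $\mu$ and the embedding statement also change under the shift, so knowing the equivalence for $A-\delta I$ does not formally yield it for $A$; the paper's own Example~\ref{ex:laplacianfullspace} (where $0\in\sigma(A)$) makes exactly this distinction and is careful to extract only \emph{finite-time} conclusions from the shifted problem. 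Contrast this with Lemmas~\ref{lem:interpolation} and~\ref{lem:resolventfractionalspace}, where the WLOG is legitimate because finite-time admissibility and the mapping properties there are shift-invariant. Two repairs are available: either drop the shift entirely and establish your norm identity on the dense class of bounded, compactly supported $u$ --- for which $\LLL u$ is entire, so all spectral integrals are defined on $\sigma(-A)$ wherever it lies, and your Fubini argument goes through since $\abs{(E_{-1})_{b,y}}$ has finite total variation and $\e^{-s\Re z}$ is bounded for $s$ in compacta --- and then read boundedness of $\LLL$ as boundedness of its extension from this dense class (this is what the paper implicitly does); or keep the shift but add the missing limiting argument, namely uniform-in-$\delta$ constants for $A-\delta I$ together with a Fatou argument as $\delta\downarrow 0$ on both sides of the equivalence. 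As written, neither is supplied, so the proof does not yet cover generators whose spectrum reaches the imaginary axis --- which includes the paper's main examples.
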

	\begin{proof}
		By the spectral theorem applied to the normal operator $A_{-1}$, we have the representations
		\[
			(rI-A_{-1}) = \int_{\sigma(-A)} (r - z) \dd E_{-1}(z)\quad (r\in \C),\qquad T(t) = \int_{\sigma(-A)} \e^{-tz} \dd E_{-1}(z).
		\]
		Let $u$ be a locally integrable complex-valued function on $[0,\infty)$. To prove the claim, we show that
  \[
       \norm{\LLL u}_{\Ll^2(\sigma(-A),\mu)} \simeq \norm*{\int_0^\infty T(s)bu(s) \dd s}_X,
  \]
  where $a \simeq b$ means there exist $C_1, C_2 > 0$ such that $a \leq C_1 b$ and $b \leq C_2 a$.
  
  To begin the proof, we
  expand the expression on the right-hand side and get
		\begin{align*}
			\norm*{\int_0^\infty T(s) bu(s) \dd s}_X^2 &= 
        \norm*{\int_{\sigma(-A)} \int_0^\infty\e^{-sz} u(s) \dd s \dd E_{-1}(z) b}_X^2\\
        &= \norm*{(r I - A_{-1})\int_{\sigma(-A)}\int_0^\infty \e^{sz} u(s) \dd s \dd E_{-1}(z) b }_{X_{-1}}^2\\
        &= \norm*{\int_{\sigma(-A)}( r - z ) \int_0^\infty \e^{sz} u(s) \dd s \dd E_{-1}(z) b }_{X_{-1}}^2\\
        &= \norm*{\int_{\sigma(-A)}( r - z)\cdot(\LLL u)(z) \dd E_{-1}(z) b }_{X_{-1}}^2
		\end{align*}
  with $r \in \rho(A)$ being the element of $\rho(A)$ that was implicitly used to define
  the space $X_{-1}$.
  
		Given a measurable function $f : \sigma(A) \to \C$, one has the following norm representation:
		\[
			\norm*{\biggl(\int_{\sigma(-A)} f \dd E_{-1}\biggr)b}_{X_{-1}}^2 = \int_{\sigma(-A)} \abs{f}^2 \dd (E_{-1})_{b,b} {,}
		\]
		see e.g.\ \cite[Theorem~4.7]{conwayCourseFunctionalAnalysis2007}. Specifying the function $z \mapsto (r - z)\cdot (\LLL u)(z)$ as the function $f$ gives
		\begin{align*}
			\norm*{\int_{\sigma(-A)}( r - z)\cdot(\LLL u)(z) \dd E_{-1}(z) b }_{X_{-1}}^2
			&= \int_{\sigma(-A)}\abs{(r-z)\cdot(\LLL u)(z)}^2 \dd \spr{E_{-1}(z) b}{b}_{X_{-1}}\\
        &= \int_{\sigma(-A)}\abs{(\LLL u)(z)}^2 \abs{r-z}^2 \dd (E_{-1})_{b,b}(z)\\
        &\simeq \int_{\sigma(-A)}\abs{(\LLL u)(z)}^2 \abs{r-z}^2 \frac{1 + \abs{z}^2}{\abs{r - z}^2} \dd (E_{-1})_{b,b}(z)\\
        &= \norm{\LLL u}^2_{\Ll^2(\sigma(-A),\mu)},
		\end{align*}
  where we have used that $r \in \rho(A)$ implies that 
  \[
    \frac{1 + \abs{z}^2}{\abs{r - z}^2} \simeq C
  \]
  on $\sigma(A)$ for some $C > 0$.
		Then we also have
		\begin{equation*}
			\norm*{\int_0^\infty T(s)bu(s) \dd s}_X^2 \simeq \norm{\LLL u}_{\Ll^2(\sigma(-A),\mu)}^2.
		\end{equation*}
		Hence $p$\kern+.06em{}-admissibility of $b$ is equivalent to 
		boundedness of the Laplace transform.
	\end{proof}
 \begin{remark}\label{rem:extensionEbb}
     We note that in the literature the specific case of normal operators with discrete spectrum is treated in full detail, whereas the general case is often argued to follow similarly as for diagonal semigroups; cf.\ e.g.\ \cite{hansenOperatorCarlesonMeasure1991,staffans_well-posed_2005,weissTwoConjecturesAdmissibility1991} with the exception of \cite[Theorem~19]{weissPowerfulGeneralizationCarleson1999}, where an approximation argument is used.

     More precisely, in these references the measure $\mu$ is chosen to be $\spr{E(\,\cdot\,)b}{b}$ which is in fact only well-defined on bounded Borel sets for $b \in X_{-1}$ (see also the proof of Theorem~19 in \cite{weissPowerfulGeneralizationCarleson1999}). In Proposition~\ref{prop:L-C_normal} we circumvent this technical subtlety by employing a slightly different definition of $\mu$, which however coincides with the abovementioned choice for diagonal semigroups.
 \end{remark}
	\subsection{Conditions for multiplication semigroups}
	Comparable results to the ones in the previous subsection for normal semigroups
	on Hilbert spaces hold for semigroups generated by
	multiplication operators on $\Ll^q$ spaces.
 
 Let $(\Omega, \mu)$ be a $\sigma$-finite measure space.
 Given a measurable function $a\colon \Omega \to \C$, we let the 
	multiplier $M_a$ on $\Ll^q(\Omega)$, $1 < q < \infty$, be defined by
		\[	
			g \mapsto M_a g = ag,\qquad g \in \mathcal{D}(M_f) = 
			\{ g \in \Ll^q(\Omega) : ag \in \Ll^q(\Omega)\}.
		\]
	For multiplication semigroups, the extrapolation and interpolation spaces
	have convenient forms, see e.g.\  {\cite[Example~II.5.7]{engel_one-parameter_2000}}.
		 Suppose that $a$ is complex-valued
		 with $\esssup_\Omega \Re a < 0$. The 
		semigroup generated by the multiplication operator $M_a$ on $X$ is
		given by $(T(t))_{t\geq 0}$, where $T(t)f = \e^{ta}f$.
		Moreover, for $n\in \Z$, the abstract Sobolev spaces $X_n$ are given by the weighted Lebesgue spaces $X_n = \Ll^q(\Omega, \abs{a}^{nq}\mu)$.
	
	\begin{lemma}
		Let $1 < q < \infty$ and assume that the multiplier $M_a$ generates a strongly continuous 
		semigroup $T$ on $\Ll^q(\Omega,\mu)$ with $\Re a < 0$ $\mu$-a.e. \kern-.2em{}on $\Omega$.
		Then we have that 
			\[
				M_{\abs{\Re a}^{-1/q}} b \in \Ll^q(\Omega,\mu)  \text{ implies that $b$ is 
				$\Ll^{q'}$-admissible for $T$.}
			\]
	\end{lemma}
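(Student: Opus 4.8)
The plan is to exploit that both the semigroup $T$ and its extrapolated version act as multiplication by $\e^{sa}$, so that the admissibility operator can be evaluated pointwise on $\Omega$ and the whole question reduces to a scalar estimate for the Laplace transform. Recall from the preceding discussion that $X_{-1} = \Ll^q(\Omega, \abs{a}^{-q}\mu)$, that $b \in X_{-1}$ amounts to $\int_\Omega \abs{b}^q \abs{a}^{-q}\dd\mu < \infty$, and that the hypothesis $M_{\abs{\Re a}^{-1/q}}b \in \Ll^q(\Omega,\mu)$ reads $\int_\Omega \abs{b}^q\abs{\Re a}^{-1}\dd\mu < \infty$.

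First I would fix an input $u \in \Ll^{q'}([0,\infty))$ with compact support and identify the Bochner integral $\int_0^\infty T(s)bu(s)\dd s$, which a priori only lies in $X_{-1}$. Since $\norm{\e^{sa}b}_{X_{-1}} \leq \norm{b}_{X_{-1}}$ for all $s \geq 0$ because $\Re a < 0$, the integrand is Bochner integrable in $X_{-1}$ for compactly supported $u$, and a scalar Fubini argument identifies the integral with the pointwise expression
\[
\omega \mapsto b(\omega)\int_0^\infty \e^{sa(\omega)}u(s)\dd s = b(\omega)\,(\LLL u)(-a(\omega)),
\]
where $-a(\omega)$ lies in the open right half plane as $\Re a < 0$.

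The key scalar ingredient is the elementary bound, valid for $\Re z > 0$ via Hölder's inequality with exponents $q'$ and $q$,
\[
\abs{(\LLL u)(z)} \leq \int_0^\infty \e^{-s\Re z}\abs{u(s)}\dd s \leq \norm{u}_{\Ll^{q'}}\biggl(\int_0^\infty \e^{-sq\Re z}\dd s\biggr)^{1/q} = q^{-1/q}(\Re z)^{-1/q}\norm{u}_{\Ll^{q'}}.
\]
Inserting $z = -a(\omega)$, so that $\Re z = \abs{\Re a(\omega)}$, and taking the $\Ll^q(\Omega,\mu)$ norm of the pointwise expression yields
\[
\norm*{\int_0^\infty T(s)bu(s)\dd s}_X^q \leq q^{-1}\norm{u}_{\Ll^{q'}}^q\int_\Omega \abs{b}^q\abs{\Re a}^{-1}\dd\mu = q^{-1}\norm{u}_{\Ll^{q'}}^q\,\norm{M_{\abs{\Re a}^{-1/q}}b}_{\Ll^q}^q.
\]
This simultaneously shows that the integral lies in $X$ and delivers the admissibility estimate with constant $C = q^{-1/q}\norm{M_{\abs{\Re a}^{-1/q}}b}_{\Ll^q}$; the general case $u \in \Ll^{q'}([0,\infty))$ then follows by density.

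The only delicate step will be the Fubini and measurability justification in the second paragraph, confirming that the $X_{-1}$-valued Bochner integral coincides a.e.\ with the pointwise formula and, crucially, that the resulting element actually belongs to $X$ rather than only to $X_{-1}$. Everything else is the scalar Laplace bound together with the direct identification of the weighted integral as the $\Ll^q$ norm of $M_{\abs{\Re a}^{-1/q}}b$. I would note that this lemma is exactly the multiplication-operator counterpart of Proposition~\ref{prop:L-C_normal}: the general Laplace--Carleson embedding there collapses here to the pointwise Hölder estimate, which explains why one obtains only a sufficient condition rather than a full characterization.
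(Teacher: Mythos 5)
Your proof is correct and follows essentially the same route as the paper's: both arguments apply H\"older's inequality with exponents $q$ and $q'$ to the inner integral in the time variable, evaluate $\int_0^\infty \e^{qs\Re a(\omega)}\dd s = (q\abs{\Re a(\omega)})^{-1}$ using $\Re a<0$, and then take the $\Ll^q(\Omega,\mu)$ norm to arrive at the bound $C(q)\norm{u}^q_{\Ll^{q'}}\int_\Omega\abs{b}^q\abs{\Re a}^{-1}\dd\mu$. Your extra care in identifying the $X_{-1}$-valued Bochner integral with the pointwise formula (via Fubini and density), and your phrasing of the H\"older step as a pointwise bound on $(\LLL u)(-a(\omega))$, are welcome refinements of detail but not a different method.
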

	\begin{proof}
		By the functional calculus for multiplication operators, the semigroup $T$ is 
		given by $(T(t)f)(s) = \e^{sa(t)}f(s)$. Therefore we need to show that there
		exists a constant $M > 0$ such that 
			\[
				\norm*{\int_0^\infty \e^{sa} b u(s) \dd s}_{\Ll^q(\Omega,\mu)}
				\leq M \norm{u}_{\Ll^{q'}([0,\infty))}
			\]
		holds for all $u \in \Ll^{q'}([0,\infty))$.

		Since 
			\[
				\norm*{\int_0^\infty \e^{sa} b u(s) \dd s}_{\Ll^q(\Omega,\mu)}^q =
				\int_\Omega\,\abs*{\int_0^\infty \e^{sa(t)} b(t) u(s) \dd s }^q \dd \mu(t),
			\]
		we can use Hölder's inequality to get 
			\begin{align*}
				&\int_\Omega \,\abs*{\int_0^\infty \e^{sa(t)} b(t) u(s) \dd s }^q \dd \mu(t)\\ 
				&\leq \int_\Omega \,\abs*{\left( \int_0^\infty \abs{\e^{sa(t)} b(t)}^q \dd s 
				 \right)^{1/q} \left( \int_0^\infty \abs{u(s)}^{q'} \dd s \right)^{1/{q'}}}^q \dd
				 \mu(t) \\
				&=\norm{u}_{\Ll^{q'}([0,\infty))}^q\int_\Omega \abs{b(t)}^q \int_0^\infty 
				\abs{\e^{sa(t)}}^q \dd s \dd \mu(t) \\
				&= \norm{u}_{\Ll^{q'}([0,\infty))}^q \int_\Omega \abs{b(t)}^q \int_0^\infty
				\e^{q s \Re(
        a(t))} \dd s \dd \mu(t) \\
				&= C(q) \norm{u}^q_{\Ll^{q'}([0,\infty))}\int_{\Omega}{\abs{b}^q}\
    \frac{1}{\abs{\Re a}} \dd \mu.
			\end{align*}
		This implies that $b$ is ${q'}$-admissible for $T$ if $b \in \Ll^q(\Omega, \abs{\Re a}^{-1}\mu)$.
	\end{proof}
	\begin{lemma}\label{lem:lapladm}
		Fix $1 < q < \infty$. Given a measurable complex-valued function $a$ on some $\sigma$-finite measure space $(\Omega,\mu)$ such that $\Re a$\kern-.05em{} is essentially bounded from above, 
		let
		$M_a$ be the corresponding multiplier on $\Ll^q(\Omega,\mu)$. Then $b \in \LL(\C, X_{-1}) \simeq X_{-1} = 
		\Ll^q(\Omega, \abs{a}^{-1}\mu)$ is $\Ll^{q'}$\!-admissible for $T$\kern-.1em{} if and only if the 
		Laplace transform induces a bounded map 
		\[
			\LLL\colon \Ll^{q'}([0,\infty)) \to \Ll^q(\Omega, \nu),\quad
			u \mapsto \LLL u = \int_0^\infty \e^{-t(\,\cdot\,)} u(t) \dd t,
		\]
		where the measure $\nu$ is defined by $\nu \coloneq (-a)_*\abs{b}^q\mu$.
	\end{lemma}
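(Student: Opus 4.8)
The plan is to reduce $\Ll^{q'}$-admissibility of $b$ to boundedness of $\LLL$ through the single pointwise identity
\[
\left(\int_0^\infty T(s) b u(s)\dd s\right)(t) = b(t)\,(\LLL u)(-a(t)),
\]
valid for $\mu$-almost every $t\in\Omega$, followed by a change of variables under the map $-a$. First I would observe that $T(s)$ acts as pointwise multiplication by $\e^{sa}$ (on $X$ and on the extrapolation space alike), so the integrand $T(s) b u(s)$ is the function $t\mapsto \e^{sa(t)} b(t) u(s)$; integrating in $s$ and recognising $\int_0^\infty \e^{sa(t)} u(s)\dd s = (\LLL u)(-a(t))$ gives the displayed formula. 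Making this rigorous is the step that needs the most care, since the Bochner integral a priori only lives in $X_{-1}$: I would verify the representation first for $u$ in a dense, well-behaved class---say compactly supported $u$, for which every integral in sight converges absolutely---using the standard fact that Bochner integration in a weighted $\Ll^q$-space commutes with pointwise evaluation, with $\sigma$-finiteness of $\mu$ supplying the Fubini/Tonelli justification.

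Taking $q$-th powers and applying Tonelli then yields
\[
\norm*{\int_0^\infty T(s) b u(s)\dd s}_{\Ll^q(\Omega,\mu)}^q = \int_\Omega \abs{b(t)}^q\,\abs{(\LLL u)(-a(t))}^q\dd\mu(t).
\]
Here I would invoke the change-of-variables formula for the pushforward measure $\nu = (-a)_*(\abs{b}^q\mu)$ on $\C$, which transforms the right-hand side into $\int_{\C}\abs{(\LLL u)(z)}^q\dd\nu(z) = \norm{\LLL u}_{\Ll^q(\nu)}^q$. This establishes the exact norm identity
\[
\norm*{\int_0^\infty T(s) b u(s)\dd s}_{X} = \norm{\LLL u}_{\Ll^q(\nu)},
\]
first on the dense class and then for all $u\in\Ll^{q'}([0,\infty))$ by passing to the limit, with both sides permitted to be infinite.

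The equivalence is then immediate: $b$ is $\Ll^{q'}$-admissible precisely when the left-hand side is dominated by $C\norm{u}_{\Ll^{q'}([0,\infty))}$ uniformly in $u$, which by the norm identity is exactly the assertion that $\LLL$ is bounded from $\Ll^{q'}([0,\infty))$ into $\Ll^q(\nu)$. The principal obstacle is the rigorous pointwise representation of the Bochner integral together with the convergence of the scalar Laplace integrals $\int_0^\infty\e^{sa(t)} u(s)\dd s$; these converge where $\Re a(t)<0$, and on the complementary set the infinite-time integral defining admissibility diverges as well, so that both sides of the claimed equivalence degenerate consistently and the identity is not vacuous.
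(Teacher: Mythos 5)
Your proposal follows exactly the paper's argument: represent the Bochner integral pointwise as $t\mapsto b(t)\,(\LLL u)(-a(t))$ using the multiplication form of the semigroup, compute the $\Ll^q(\Omega,\mu)$-norm, and convert it into $\norm{\LLL u}_{\Ll^q(\nu)}$ via the change-of-variables formula for the pushforward measure $\nu=(-a)_*\abs{b}^q\mu$, so that the admissibility estimate and the boundedness of $\LLL$ are literally the same inequality. The only difference is that you spell out the density/convergence justifications (compactly supported inputs, behaviour where $\Re a(t)\geq 0$) that the paper's proof leaves implicit, which is a refinement of, not a departure from, its approach.
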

	\begin{proof}
	We have
	\begin{align*}
		\norm*{\int_0^\infty T(s) b u(s) \dd s}_{\Ll^q(\Omega,\mu)}^q &= 
		\int_\Omega\abs{b(t)}^q \abs*{\int_0^\infty \e^{sa(t)} u(s) \dd s}^q \dd \mu(t)\\
		&= \int_\Omega \abs{b(t)}^q \abs{(\LLL u)(-a(t))}^q \dd \mu(t) \\
		&= \int_\Omega \abs{(\LLL u)\circ (-a)}^q \dd (\abs{b}^q \mu) \\
		&=\norm{(\LLL u)}_{\Ll^q(\Omega,(-a)_*\abs{b}^q\mu)}^q,
	\end{align*}
	thus $b$ is $\Ll^{q'}$-admissible if and only if 
	\[
		\norm{(\LLL u)}_{\Ll^q(\Omega,\nu)} \leq \norm{u}_{\Ll^{q'}([0,\infty))},
	\]
	which is what we wanted to show.
	\end{proof}
	Due to invariance of admissibility under similarity transformations, this result extends to generators that are similar to a multiplier or unitarily equivalent to a multiplier, 
	and thus applies in particular to normal operators on a Hilbert space by a version of the 
	spectral theorem for unbounded operators. Furthermore, the diagonal case follows as a special case:
	\begin{example}[Diagonal semigroups]
	Let $A$ generate a diagonal semigroup on the state space $X = \ell^q(\N) = 
	\Ll^q(\N,\xi)$, where $\xi = \sum_k \delta_k$ is the counting measure on $\N$.
	Componentwise, this means that $(Ax)_k = \lambda_k x_k$ holds for all $k\in\N$ and all $x\in X$. 
	Interpreting $A$ as a multiplication operator on $X$ with symbol $a \colon \N \to \C,\ 
	k \mapsto \lambda_k$, we also identify the control operator $b \in X_{-1}$
    with a function $b\colon \N \to \C,\ k \mapsto b_k$.
	By Lemma~\ref{lem:lapladm}, the input element $b$ is admissible for $T$ if and only if the Laplace
	transform is a bounded map $\LLL\colon \Ll^p([0,\infty)) \to \Ll^q(\Omega, \nu)$ with measure $\nu$ given by
    \[
        \nu = (-a)_*\abs{b}^q\xi = \abs{b}^q \sum_{k} \delta_{-\lambda_k} = 
        \sum_k \,\abs{b_k}^q \delta_{-\lambda_k},
	\]
	which is the same measure as $\mu$ in Theorems~3.2 and 3.5 in \cite{jacob_applications_2014}.
	\end{example}
	\section{Examples with generators having non-discrete spectra}
	By considering generators
	with non-discrete spectrum, we can find examples to illustrate the usage of the Laplace--Carleson type condition 
	in the non-diagonal setting. Candidates are the Dirichlet or Neumann 
	Laplacians on a suitable domain, since their spectra need not be discrete.
	In passing, we note that the Neumann Laplacian on $\Omega$ has purely discrete spectrum if and
	only if the Rellich--Kondrachov compactness theorem holds for the domain $\Omega$. Hence
	a possible example would be the Neumann Laplacian on a sufficiently rough
	domain that fails the Rellich--Kondrachov embedding theorem, see e.g.\ 
    \cite{hempelEssentialSpectrumNeumann1991}. Another 
 operator with non-discrete spectrum, which we will consider in the sequel,
 is the Laplacian 
 on the whole space $\R^n$.
	\subsection{The Laplacian on the full space}
 \label{sec:laplacianfullspace}
 \begin{example} \label{ex:laplacianfullspace}
	Let $n \leq 3$ and consider the state space $X = \Ll^2(\R^n)$ and the 
	Laplacian $A = \Delta$ on $X$ with domain $\mathcal{D}(A) = \Hh^2(\R^n)$, which generates a $\mathrm{C}_0$-semigroup on $X$. 
 
 We choose the Dirac delta distribution $\delta_0$ as the control operator $b$ and claim that $b \in X_{-1}$ if and only if $n \leq 3$.
	\begin{proof}[Proof of the claim]
		By self-adjointness of $A$ the domains  $\mathcal{D}(A)$ and $\mathcal{D}(A^*)$ are identical. 
        Therefore the extrapolation
		space $X_{-1}$ is given by the dual of $\mathcal{D}(A) = \Hh^2(\R^n)$ with respect
		to the pivot space $X = \Ll^2(\R^n)$. As the spatial domain is the full space $\R^n$,
        we have
		$\Hh^2(\R^n) = \Hh^2_0(\R^n)$. Hence---by definition of Sobolev spaces with negative integer indices---it holds that $X_{-1} = \Hh^{-2}(\R^n)$.
		In the Fourier domain the space $\Hh^{-2}(\R^n)$ has the representation
			\[
				\Hh^{-2}(\R^n) = \{ u \in \mathcal{S}'(\R^n) : (1 + \abs{\,\cdot\,}^2)^{-1} \hat u\in
				\Ll^2(\R^n) \},
			\]
		where $\mathcal{S}'(\R^n)$ denotes the space of tempered distributions on $\R^n$.
		As the Fourier transform of $\delta_0$ computes to $\hat \delta_0 \equiv 1$ in the sense of tempered distributions, we have $\delta_0 \in X_{-1}$
		if and only if
			\[
				\int_{\R^n}\frac{1}{(1 + \abs{\xi}^2)^2} \dd \xi < \infty.
			\]
		By transforming to spherical coordinates in $\R^n$, this is equivalent to 
		finiteness of 
			\[
				\int_0^\infty \frac{r^{n-1}}{(1 + r^2)^2} \dd r,
			\]
		which is the case if and only if $n \leq 3$.
	\end{proof}
 
	The spectral measure $E_{g,g}$ corresponding 
    to the operator $A$ and some $g\in\Ll^2(\R^n)$ is computed in e.g.\ \cite[Equation~(7.34)]{teschlMathematicalMethodsQuantum2014} using the representation of the Laplacian by its Fourier symbol $\xi \mapsto -\abs{\xi}^2$ and is given by the Lebesgue measure $\lambda^n$ weighted with the density 
		\[
			\rho_{g,g}(z) = \chi_{(-\infty,0]}(z) \abs{z}^{\frac n2 - 1}\int_{\partial \ball{1}{n}{0}}
			\bigl|{\hat g(\abs{z}^{1/2}w)}\bigr|^2 \dd\hausdorff^{n-1}(w);
		\]
  in particular the spectrum of $A$ is purely absolutely continuous and $\sigma(A) =
  (-\infty,0]$.
  
	In the above formula $\partial\ball{1}{n}{0} \coloneq \{ y \in \R^n : \abs{y} = 1 \}$
	denotes the unit sphere in $\R^n$, $\hat g$ denotes the Fourier 
    transform of $g\in\Ll^2(\R^n)$ and $\hausdorff^{n-1}$ is the
	$(n-1)$-dimensional Hausdorff measure on $\R^n$. Note that this representation of $\rho_{g,g}$ is also valid in dimension $n = 1$ using the $0$-dimensional Hausdorff measure.
	
For our example of the control operator $b = \delta_0 \in X_{-1}$, we need to consider the extension of $E_{g,g}$ for $g \in \Hh^{-2}(\R^n) = X_{-1}$. For $g \in X_{-1}$ the density $(\rho_{-1})_{g,g}$ is then given by 
    \[
        (\rho_{-1})_{g,g}(z) = (1 + \abs{z}^2) \chi_{(-\infty,0]}(z) \abs{z}^{\frac n2 - 1}\int_{\partial \ball{1}{n}{0}}
			\abs*{\bigl(\fourier(R(1,A)g)\bigr)(\abs{z}^{1/2}w)}^2 \dd\hausdorff^{n-1}(w).
    \]
    We use $\hat\delta_0 \equiv 1$ to get
    \begin{align*}
        (\rho_{-1})_{\delta_0,\delta_0}(z) &= (1 + \abs{z}^2) \chi_{(-\infty,0]}(z) \abs{z}^{\frac n2 - 1}\int_{\partial \ball{1}{n}{0}}
			\abs*{\frac{1}{1 + \abs{\abs{z}^{1/2} w}^2}}^2 \dd\hausdorff^{n-1}(w)\\
   &= (1 + \abs{z}^2) \chi_{(-\infty,0]}(z) \abs{z}^{\frac n2 - 1}\int_{\partial \ball{1}{n}{0}}
			(1 + \abs{z})^{-2} \dd\hausdorff^{n-1}(w)\\
    &=\frac{1 + \abs{z}^2}{(1 + \abs{z})^{2}} \chi_{(-\infty,0]}(z) \abs{z}^{\frac n2 - 1}\cdot C(n)
    \end{align*}
    Since  $\frac{1}{2} \leq \frac{1 + \abs{z}^2}{(1 + \abs{z})^{2}} \leq 1$ 
    holds for $z \in \C$,
    we can reduce to the density 
    \begin{equation}
    (\rho_{-1})_{\delta_0,\delta_0}(z) = \chi_{(-\infty,0]}(z) \abs{z}^{\frac n2 - 1}\label{eq:reduceddensity}
    \end{equation}
    in applications of Proposition~\ref{prop:L-C_normal}.
    
    To make statements concerning finite-time $p$\kern+.06em{}-admissibility
	of $b$, we shift the generator of the semigroup such that the
 resulting semigroup is exponentially stable and apply the test from Proposition~\ref{prop:L-C_normal} for infinite-time admissibility. 
     
     Concretely, since $0 \in \sigma(A)$, to apply Proposition~\ref{prop:L-C_normal} we consider the shifted semigroup generated by the operator $A_1 = A - I$, such that $\sigma(A_1) = \sigma(A) - 1 = (-\infty, -1]$.
    Then by Proposition~\ref{prop:L-C_normal}, and using the expression for the density  $(\rho_{-1})_{\delta_0,\delta_0}$ from \eqref{eq:reduceddensity}, the proof of finite-time $\Ll^p$-admissibility of $b$ reduces
    to finding a Laplace transform bound of the form 
	\begin{equation}
		\int_{-\infty}^{-1} \,\abs*{\int_0^\infty \e^{t s} u(t) \dd t}^2
		(-{s})^{\frac n2-1} \dd s \leq C\norm{u}^2_{\Ll^p([0,\infty))}\label{eq:laplaceboundspectralmeasure}
	\end{equation}
 for some $C > 0$.
	Using the Hölder inequality, we have
	\begin{equation}
	 \begin{aligned}
		\int_{-\infty}^{-1} \,\abs*{\int_0^\infty \e^{t{s}} u(t) \dd t}^2
		(-{s})^{\frac n2-1} \dd {s} &=
		\int_{1}^{\infty} \,\abs*{\int_0^\infty \e^{-ty} u(t) \dd t}^2
		y^{\frac n2-1} \dd y \\
	&\leq \int_1^\infty \norm{\e^{-(\,\cdot\,) y}}_{\Ll^{p'}([0,\infty))}^2
		\norm{u}_{\Ll^p([0,\infty))}^2 y^{\frac n2 -1} \dd y.\label{eq:hölderspectralmeasure}
	   \end{aligned}
	\end{equation}
	As $\norm{\e^{-(\,\cdot\,) y}}_{\Ll^{p'}([0,\infty))}^2 = (yp')^{-2/{p'}}$, the integral above stays bounded if 
		\begin{equation}
			-\frac{2}{p'}+\frac n2-1 < -1 \Leftrightarrow
            p > \frac{4}{4-n}.\label{eq:conditiononp}
		\end{equation}
	Since
	$\delta_0$ only defines an element of $X_{-1}$ for $n \leq 3$, one arrives
	at the condition 
    $p > 4/3$ in the one-dimensional setting, $p > 2$ in two dimensions
	and $p > 4$ for $n = 3$.
 \end{example}
	\subsection{The Laplacian on a half line}
 \begin{example}
	We consider the Dirichlet controlled heat equation on the half axis $[0,\infty)$.
	Set $X = \Ll^2([0,\infty))$ and consider the Laplacian $A = \frac{\ddd^2}{\ddd t^2}$ on the domain $\mathcal{D}(A) = 
	\Hh^1_0([0,\infty)) \cap \Hh^2([0,\infty))$. We select the scalar input space $U = \C$ and choose the control operator
	$b = \delta_0' \in X_{-1} \simeq \LL(\C,X_{-1})$. Our aim is to investigate $p$\kern+.06em{}-admissibility of $b$ for this example system.

	We want to compute the resolvent of $A$ to aid us in finding the spectral 
	measure of $A$. To that end, for $g \in \Ll^1([0,\infty)) \cap \Ll^2([0,\infty))$, define the odd extension $\tilde g \in \Ll^1(\R) \cap \Ll^2(\R)$ by
		\begin{equation*}
			\tilde g(x) = \begin{cases}
								g(x) & x \geq 0\\
								-g(-x) & x < 0.
							\end{cases}
		\end{equation*}
    Denoting the Fourier variable on the full real axis
	by $\eta$ and employing the representation of the resolvent of the Laplacian by its symbol in the Fourier domain, we have
		\begin{equation*}
			R(z,A) g = \Bigl.\fourier^{-1} \bigl((z+\eta^2)^{-1} (\fourier\tilde g)(\eta)\bigr)\Bigr|_{[0,\infty)},
		\end{equation*}
	which allows for an explicit integral representation of the resolvent. In fact,
 we have
	\begin{align*}
		R(z,A) g(x) &= \frac{1}{\sqrt{2\pi}}\fourier^{-1}\biggl( \frac{1}{z+\eta^2} \int_{-\infty}^\infty \e^{-\I y \eta} \tilde g(y) \dd y \biggr)(x)\\
		&= \frac{1}{\sqrt{2\pi}}\fourier^{-1} \biggl( \frac{1}{z+\eta^2} \int_0^\infty (\e^{-\I y \eta} - \e^{\I y \eta}) g(y) \biggr)(x)\\
		&= \frac{2}{\I\sqrt{2\pi}}\fourier^{-1} \biggl( \frac{1}{z+\eta^2} \int_0^\infty \sin(y\eta) g(y) \dd y \biggr)(x) \\
		&= \frac{1}{\pi \I}\int_{-\infty}^\infty \frac{\e^{\I x \eta}}{z + \eta^2}\int_0^\infty \sin(y\eta) g(y) \dd y \dd \eta \\
		&= \frac{1}{\pi \I}\int_0^\infty \frac{\e^{\I x \eta} - \e^{-\I x \eta}}{z+\eta^2}\int_0^\infty \sin(y\eta) g(y) \dd y \dd \eta \\
		&= \frac{2}{\pi}\int_0^\infty \frac{\sin(x \eta)}{z+\eta^2}\int_0^\infty \sin(y\eta) g(y) \dd y \dd \eta.
	\end{align*}
	For brevity, in the sequel we denote the one-sided sine transform by $\mathcal{S}$, i.e.
	\[(\mathcal{S}f)(x) \coloneq \int_0^\infty f(\xi) \sin(x\xi) \dd \xi,\qquad f \in \Ll^1([0,\infty)) \cap \Ll^2([0,\infty)).\]
	Still considering $g \in \Ll^1([0,\infty)) \cap \Ll^2([0,\infty))$ we compute
	\begin{align*}
		\spr{g}{R(z,A){g}}_{\Ll^2([0,\infty))} &= \int_0^\infty {g}(t) \overline{(R(z,A){g})(t)} \dd t \\
		&= \frac 2\pi \int_0^\infty {b}(t) \int_0^\infty \frac{\sin(\tau t)}{\overline z + \tau^2} \int_0^\infty \sin(\tau s) \overline{{g}(s)}\dd s \dd \tau \dd t\\
		&= \frac 2\pi \int_0^\infty {g}(t) \int_0^\infty \frac{\sin(\tau t)}{\overline z + \tau^2} (\mathcal{S}\,\overline {g})(\tau) \dd \tau \dd t \\
		&= \frac 2\pi \int_0^\infty \frac{(\mathcal{S}\,\overline {g})(\tau)}{\overline z + \tau^2} \int_0^\infty \sin(\tau t) {g}(t) \dd t \dd \tau \\
		&= \frac 2\pi \int_0^\infty \frac{(\mathcal{S}\,\overline {g})(\tau)}{\overline z + \tau^2} (\mathcal{S} {g})(\tau) \dd \tau \\
		&= \frac 2\pi \int_0^\infty \frac{\abs*{(\mathcal{S} {g})(\tau)}^2}{\overline{z} + \tau^2} \dd \tau.
	\end{align*}
	We set $\rho = \tau^2$ and get
	\begin{align*}
		\spr{g}{R(z,A){g}}_{\Ll^2([0,\infty))} &= \frac 1\pi \int_0^\infty \frac{\bigl\lvert(\mathcal{S}{g})(\sqrt{\rho})\bigr\rvert^2}{\overline{z} + \rho} \frac{\ddd\rho}{\sqrt\rho}\\
		&= \frac{1}{\pi} \int_{\C} \chi_{[0,\infty)}(\rho) \frac{{\bigl|(\mathcal{S}{g})(\sqrt{|\rho|})\bigr|}^2}{\overline{z} + \rho} \frac{\ddd\rho}{\sqrt{\lvert\rho\rvert}}\\
		&= \frac{1}{\pi} \int_{\C} \chi_{(-\infty,0]}(\rho) \frac{{\bigl|(\mathcal{S}{g})(\sqrt{|\rho|})\bigr|}^2}{\overline{z} - \rho} \frac{\ddd\rho}{\sqrt{\lvert\rho\rvert}}\\
		&= \int_{\C} \frac{1}{\overline{z} - \rho} \dd E_{g,g}(\rho),
	\end{align*}
	where $E_{g,g}$ is Lebesgue measure weighted with the function 
	$\chi_{(-\infty,0]} (\,\cdot\,){\lvert(\mathcal{S}{g})(\sqrt{\lvert\,\cdot\,\rvert})\rvert^2}/(\pi{\sqrt{\lvert\,\cdot\,\rvert}})
	$. We note that---like the Fourier transform---the sine and cosine transforms extend to operators defined on $\Ll^2$ functions by density; cf.\ e.g.\ \cite{goldbergCertainOperatorsFourier1959}.

    Returning to the control operator $b = \delta_0'$, we note that
    \[(\mathcal{S}\delta'_0)(t) = \langle \delta'_0 , \sin((\,\cdot\,) t) \rangle = \bigl.-\frac{\ddd}{\ddd \xi} \sin(t\xi)\bigr|_{\xi = 0} = -t.\]
    In the same manner as for the Laplacian on the full Euclidean space as in Section~\ref{sec:laplacianfullspace} above,
    we can determine the measure $\mu$ from Proposition~\ref{prop:L-C_normal}. This measure $\mu$ is formally given by the expression \enquote{$\spr{E(\,\cdot\,)\delta_0'}{\delta_0'}_X$}---which is not surprising regarding the extension procedure used by Weiss in \cite{weissPowerfulGeneralizationCarleson1999} mentioned in Remark~\ref{rem:extensionEbb} above---giving a measure that is equivalent 
    (in the sense of mutual absolute continuity) to the measure defined by
     \[E_{\delta'_0,\delta'_0}(z) = \chi_{(-\infty,0]} (z)\frac{\lvert(\mathcal{S}\delta'_0)(\sqrt{\lvert z\rvert})\rvert^2}{\sqrt{\lvert z\rvert}}\lambda^1(z)= \chi_{(-\infty,0]}(z) \abs{z}^{1/2}\lambda^{1}(z);\]
     note also that the associated Radon--Nikodým densities are bounded away from $0$ and $\infty$.
    
    In terms of $p$\kern+.06em{}-admissibility of $b$, the computation in \eqref{eq:laplaceboundspectralmeasure}, \eqref{eq:hölderspectralmeasure} and \eqref{eq:conditiononp} then yields
    finite-time admissibility of $b$ for all $p > 4$.
 \end{example}
 \begin{example}
	Proceeding in a similar manner as in the prior case of Dirichlet control, we can consider the Neumann controlled heat equation
	on the non-negative real axis. On the same state space $X = \Ll^2([0,\infty))$ we now consider the 
	the Laplace operator $A = \frac{\mathrm{d}^2}{\mathrm{d} t^2}$ on the domain $\mathcal{D}(A) = 
	\{ f \in \Hh^2([0,\infty)) : f'(0) = 0 \}$ and the control operator $b = \delta_0$
	on the input space $U = \C$.

	Analogously as in the Dirichlet setting, but using the even extension of functions
	defined on the half line, the resolvent of $A$ computes to 
	\[
		(R(z,A)g)(x) = \frac 2\pi \int_0^\infty \frac{\cos({x\eta})}{z + \eta^2}
		\int_0^\infty \cos(y\eta) g(y) \dd y \dd \eta
	\]
 for $g \in \Ll^1([0,\infty)) \cap \Ll^2([0,\infty))$.
	For the spectral measure, an analogous calculation as in the Dirichlet case yields 
	\[
		E_{g,g}(z) = \chi_{(-\infty,0]}(z)\frac{\lvert(\mathcal{C}x)(\sqrt{\lvert z \rvert})\rvert^2}{\pi\sqrt{\lvert z \rvert}} \lambda^{1}(z),\qquad g \in \Ll^1([0,\infty)) \cap \Ll^2([0,\infty)).
	\]
	Here $\mathcal{C}$ denotes the one-sided cosine transform defined by $(\mathcal{C}f)(x) \coloneq \int_0^\infty f(\xi) \cos(x\xi) \dd \xi$ for $f \in \Ll^1([0,\infty)) \cap \Ll^2([0,\infty))$, and we also denote by $\mathcal{C}$ the extension of the transform to $\Ll^2([0,\infty))$ and to tempered distributions.
	
    As $\mathcal{C} \delta_0 \equiv 1$, the same argument as in the Dirichlet case above lets us determine the measure $\mu$ from Proposition~\ref{prop:L-C_normal} to be equivalent to 
		\[
			E_{\delta_0,\delta_0}(z) = \chi_{(-\infty,0]}(z) \abs{z}^{-1/2}\lambda^{1}(z).
		\]
    Since this is the same density as in the full space case with $n = 1$ in 
    Example~\ref{ex:laplacianfullspace}, the computation in 
    \eqref{eq:laplaceboundspectralmeasure}, \eqref{eq:hölderspectralmeasure} 
    and \eqref{eq:conditiononp} there yields finite-time $\Ll^p$-admissibility 
    of $b = \delta_0$ for $A$ for all $p > \frac 43$.
 \end{example}
	\section{Applications to infinite-dimensional input spaces}
 \label{sec:applicationsinfdim}
     Laplace--Carleson embeddings have been proven to be useful in the context
     of admissibility with finite-dimensional input spaces. In this section we
     propose 
     a partial extension to infinite-dimensional input spaces. We note that the 
     question of a full characterization of $p$\kern+.06em{}-admissibility in terms
     of these criteria---without restrictive additional 
     assumptions---seems to be open at this moment in time.
 
	However, imposing additional assumptions on the system, we can extend
    admissibility results for finite-dimensional input spaces to 
    infinite-dimensional ones. One of such  conditions is that \emph{$A$ has 
    the $p$\kern+.06em{}-\kern-.15em{}Weiss property.}
   
	\begin{definition}[$p$\kern+.06em{}-\kern-.15em{}Weiss property]
	Let $p\in[1,\infty]$ and $A$ generate a $\mathrm{C}_0$-semigroup on the Banach space $X$. Then we say that the \emph{$p$\kern+.06em{}-\kern-.15em{}Weiss property for control operators} holds
	for $A$ if for any Banach space $U$ and any $B\in\LL(U,X_{-1})$, we have that
	\[
	\text{$B$ is infinite-time $p$\kern+.06em{}-admissible }\Leftrightarrow 
	\sup_{\Re \lambda > 0}(\Re \lambda)^{\frac 1p}\norm{R(\lambda,A) B} < \infty.
	\]
    The \emph{$p$\kern+.06em{}-\kern-.15em{}Weiss property for observation operators} holds if for all  $C \in \LL(X_1,Y)$, $Y$ Banach space, 
    \[
	\text{$C$ is infinite-time $p$\kern+.06em{}-admissible }\Leftrightarrow 
	\sup_{\Re \lambda > 0}(\Re \lambda)^{\frac{1}{p'}}\norm{C R(\lambda,A)} < \infty.
	\]
	\end{definition}
 
 For general semigroup generators $A$, the $p$\kern+.06em{}-\kern-.15em{}Weiss property does not hold, as indicated in the introduction; 
    see e.g.\ \cite{jacobADMISSIBLEWEAKLYADMISSIBLE2002,jacobDisproof2000,zwartWeakAdmissibilityDoes2003a} for $p = 2$. In special cases---such as the case of self-adjoint generators $A$---it does hold; see Proposition~\ref{prop:selfadjoint}
    below.
 
    \begin{theorem}[Uniform weak admissibility \& Weiss property]
  \label{prop:uniformweiss}
  Let $A$ be the generator of a $\mathrm{C}_0$-semigroup on
  the Banach space $X$. Fix
  an exponent $q \in (1,\infty)$ and assume that the $q$-\kern-.15em{}Weiss property holds for $A$. Then the following are equivalent for $B \in \LL(U,X_{-1})$\textup{:}
    \begin{enumerate}[leftmargin=3\parindent,rightmargin=3\parindent,label=\textup{(}\kern-.05em{}\alph*\textup{)}]
    \item $B$ is $\Ll^q$-admissible for $A$.
    \item For all
  $u \in U$ with $\norm{u}_U =1$ the input element $Bu \in X_{-1}$ is $\Ll^q$-admissible 
 and the admissibility constants, i.e.\ the numbers \[C(u) \coloneq \inf \biggl\{C>0 : \forall v\in\Ll^q([0,t])\;\norm*{\int_0^t T(t-s) Bu v(s) \dd s} \leq C \norm{v}_{\Ll^q([0,t])} \biggr\},\] are uniformly bounded in $u$.
    \end{enumerate}
    \end{theorem}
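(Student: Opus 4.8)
The plan is to prove the two implications separately, treating $\text{(a)} \Rightarrow \text{(b)}$ as the routine direction and reserving the $q$-Weiss property for the converse. For $\text{(a)} \Rightarrow \text{(b)}$, I would fix $u \in U$ with $\norm{u}_U = 1$ and observe that for scalar $v$ we have the identity $\int_0^t T(t-s)Bu\,v(s)\dd s = \int_0^t T(t-s)B(u\,v(s))\dd s$, where $s \mapsto u\,v(s)$ is the $U$-valued input with $\norm{u v}_{\Ll^q([0,t],U)} = \norm{v}_{\Ll^q([0,t])}$. The admissibility estimate for $B$ with constant $C$ then yields $C(u) \leq C$ for every unit vector $u$, which is precisely the claimed uniform bound; note that this direction uses neither the $q$-Weiss property nor analyticity.

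For the converse $\text{(b)} \Rightarrow \text{(a)}$, the idea is to convert the uniform bound on the admissibility constants into the resolvent condition appearing in the $q$-Weiss property. Let $M \coloneq \sup_{\norm{u}_U=1} C(u) < \infty$. Following the computation recorded in the note after Lemma~\ref{lem:resolventfractionalspace}, for each unit vector $u$ I would insert the input $v = \e^{-\lambda(\,\cdot\,)}$ with $\Re\lambda > 0$ into the infinite-time admissibility estimate for the scalar input element $Bu$. Since $\int_0^\infty \e^{-\lambda s} T(s) Bu \dd s = R(\lambda,A)Bu$ and $\norm{\e^{-\lambda(\,\cdot\,)}}_{\Ll^q([0,\infty))} = (q\Re\lambda)^{-1/q}$, this gives $(\Re\lambda)^{1/q}\norm{R(\lambda,A)Bu}_X \leq M q^{-1/q}$ uniformly in $u$ and $\lambda$. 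Taking the supremum over $\norm{u}_U = 1$ turns the left-hand side into $(\Re\lambda)^{1/q}\norm{R(\lambda,A)B}_{\LL(U,X)}$, so that $\sup_{\Re\lambda>0}(\Re\lambda)^{1/q}\norm{R(\lambda,A)B}_{\LL(U,X)} \leq M q^{-1/q} < \infty$. The $q$-Weiss property for $A$, applied to the operator $B \in \LL(U,X_{-1})$ with its (possibly infinite-dimensional) input space $U$, then upgrades this resolvent bound to infinite-time $\Ll^q$-admissibility of $B$, which is (a).

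The main obstacle — and the reason the $q$-Weiss property is indispensable — is that per-$u$ admissibility of the elements $Bu$ does not by itself control the genuinely $U$-valued convolution $\int_0^t T(t-s)Bu(s)\dd s$ for inputs $u \in \Ll^q([0,t],U)$, since for infinite-dimensional $U$ one cannot simply decompose such an input into scalar components. The $q$-Weiss property circumvents this by reducing admissibility to a resolvent estimate, and the key observation making the argument go through is that the operator norm $\norm{R(\lambda,A)B}_{\LL(U,X)}$ is exactly the supremum of $\norm{R(\lambda,A)Bu}_X$ over unit vectors $u$, so that a bound uniform in $u$ reassembles into an operator-norm bound. The one point to handle with care is that the admissibility constants $C(u)$ must be read in the infinite-time sense, i.e.\ uniformly in $t$, in order to feed into the Weiss property; this is consistent with the convention adopted in the introduction that unqualified admissibility means infinite-time admissibility.
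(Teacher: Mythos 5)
Your proposal is correct and follows essentially the same route as the paper's proof: the easy direction specializes $B$'s admissibility estimate to inputs of the form $s \mapsto u\,v(s)$, and the converse feeds normalized exponential inputs into the per-$u$ infinite-time estimates to obtain the resolvent bound $\sup_{\Re\lambda>0}(\Re\lambda)^{1/q}\norm{R(\lambda,A)B}_{\LL(U,X)} < \infty$, which the $q$-\kern-.15em{}Weiss property upgrades to admissibility of $B$. If anything, your version is slightly tidier, since you work directly with the infinite-time estimate rather than the paper's intermediate maps $\Phi_{t,Bu}$ and the $\limsup$ over $t$, and you correctly track the harmless factor $q^{-1/q}$.
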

    \begin{proof}{ 
    We note that for any $u \in U$ with norm 1
    and any $v \in \Ll^q([0,\infty))$ we have $uv \in \Ll^q([0,\infty),U)$. To show \enquote{$\text{(a)} \Rightarrow \text{(b)}$}, we assume that $B\colon U \to X_{-1}$ is $q$\kern+.06em{}-admissible. Then by admissibility of $B$ we infer
    \[
        \norm*{\int_0^t T(t-s) B(uv)(s) \dd s}_{X} \leq C \norm{uv}_{\Ll^q([0,t], U)} = C
         \norm{v}_{\Ll^q([0,t])},
    \]
    which implies that $Bu \in X_{-1}$ is $q$\kern+.06em{}-admissible with admissibility constant that does not depend on $u$.}
    Hence the admissibility constants are also uniformly bounded in $u$.
    
    For the other implication, we assume that for all $u \in U$ with unit norm the input
    element $Bu$ is $q$\kern+.06em{}-admissible, i.e.\ 
    \[
        \norm*{\int_0^t T(t-s) B(uv)(s) \dd s}_{X} \leq C(u)
         \norm{v}_{\Ll^q([0,t])},
    \]and that the associated constants are uniformly bounded in $u$, which is to say we have \[ \sup_{u\in U,\norm{u}_U = 1} C(u) < \infty.\] 
    
    Hence the mapping $\Phi_{t,Bu}\colon \Ll^q([0,\infty)) \to X, \, v \mapsto \int_0^t T(s) Buv(s) \dd s$
        allows for the bound $M_t \coloneq \sup_{u\in U, \norm{u} = 1} \norm{\Phi_{t,Bu}} < \infty$.
    Using infinite-time $q$\kern+.06em{}-admissibility of $Bu$, we also get a bound of the form  {$M \coloneq \limsup_{t\to\infty} M_t < \infty$}.
    Furthermore, setting $\tilde v(s) \coloneq (q\Re \lambda)^{1/q} \e^{-\lambda s}$ we get $\norm{\tilde v}_{\Ll^q([0,\infty))} = 1$.
    Thus $\norm{\Phi_{t,Bu} \tilde v}_X \leq \norm{\Phi_{t,Bu}}\norm{\tilde v}_{\Ll^q([0,\infty))}  = \norm{\Phi_{t,Bu}} \leq M$
    and
    \[
        \norm*{\int_0^\infty T(s) Bu \tilde v(s) \dd s}_X = \lim_{t\to\infty} \norm{\Phi_{t,Bu} \tilde v}_X \leq M.
    \]
    As the resolvent is the Laplace transform of the semigroup, it follows that
    \begin{align*}
        (\Re \lambda)^{1/q} \norm{R(\lambda, A) Bu}_X &= 
        (\Re \lambda)^{1/q} \norm*{\int_0^\infty \e^{-\lambda s}T(s) Bu\dd s}_X \\
        &= \norm{\Phi_{t,Bu} \tilde v}_X \leq M.
    \end{align*}
    Due to our assumption, $\norm{\Phi_{t,Bu}}$ is uniformly bounded, which implies
    \[
        (\Re \lambda)^{1/q} \norm{R(\lambda, A) B} = 
        \sup_{{u\in U,\norm{u}_U = 1}}(\Re \lambda)^{1/q}\,\norm{R(\lambda, A) Bu}_X \leq M;
    \]
    taking the supremum among $\lambda \in \C$ with $\Re \lambda > 0$ then gives
    \[
       \sup_{\Re\lambda > 0} (\Re \lambda)^{1/q} \,\norm{R(\lambda, A) B}_X \leq M < \infty.
    \]
    Hence $B$ is infinite-time $q$\kern+.06em{}-admissible by the assumed validity of the $q$\kern+.06em{}-\kern-.15em{}Weiss property for $A$.
    \end{proof}
    \begin{remark}
     Theorem \ref{prop:uniformweiss} allows extending the Laplace--Carleson test for admissibility to infinite-dimensional input spaces. The practicality of this condition is yet to be seen, but this approach allows for assessing admissibility sharply in $p$. For example, a possible application to the heat equation could involve estimates
     for the norm of the trace of eigenfunctions on the boundary by means of eigenfunction expansions, as the eigenfunctions of the Laplacian with Dirichlet or Neumann boundary conditions are explicitly known for well-behaved geometries. Moreover, there is a wide array of work on norms of the appropriate traces of eigenfunctions on the boundary, see e.g.\ \cite{grebenkovGeometricalStructureLaplacian2013,zelditchBilliardsBoundaryTraces2003,tataruRegularityBoundaryTraces1998,hassellUpperLowerBounds2002,backerBehaviourBoundaryFunctions2002}.
    \end{remark}
 In this context we note that the $p$\kern+.06em{}-\kern-.15em{}Weiss property is strongly related to 
 admissibility of $(-A)^{1/p}$ for $A$, which was shown by Haak in \cite{haakKontrolltheorieBanachraeumenUnd2005}. If $A$ generates a 
 semigroup of self-adjoint operators and $\sigma(A) \subset (-\infty,0]$, this even leads to
 validity of the $p$\kern+.06em{}-\kern-.15em{}Weiss property for $p \leq 2$.
 \begin{figure}[ht]
     \centering
        \begin{tikzcd}[every matrix/.append style={draw=black, line width=.6pt, inner sep=3pt, rounded corners=.2cm}]
        	{(-A)^{1/p} \ \mathrm{L}^p\text{-admissible (observation)}} \\
        	{A \text{ has }\mathrm{L}^p \text{-estimates}} & {A^* \text{ has }\mathrm{L}^p \text{-estimates}} \\
        	{p\text{\kern+.06em{}-\kern-.15em{}Weiss property (observation)}} & {p'\text{\kern-.03em{}-\kern-.15em{}Weiss property (control)}}
        	\arrow[Leftrightarrow, "\text{self-adjoint}", from=2-1, to=2-2]
        	\arrow[Leftrightarrow, from=2-2, to=3-2]
        	\arrow[Leftrightarrow, from=2-1, to=3-1]
        	\arrow[Leftrightarrow, from=1-1, to=2-1]
        \end{tikzcd}
     \caption{The $p$\kern+.06em{}-\kern-.15em{}Weiss property for self-adjoint $A$, see \cite{haakKontrolltheorieBanachraeumenUnd2005}.}
     \label{fig:weisspropselfadjoint}
     \vspace{-.5cm}
\end{figure}
  \begin{proposition}
     \label{prop:selfadjoint}
     Let $A$ be a self-adjoint operator on a Hilbert space $X$ with spectrum
     contained in $(-\infty,a]$ for some $a\in\R$. Then $A$ has the $p$\kern+.06em{}-\kern-.15em{}Weiss
     property \textup{(}\kern+.05em{}for control operators\textup{)} if $p \leq 2$.
 \end{proposition}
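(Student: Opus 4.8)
The plan is to exploit that a self-adjoint $A$ with $\sigma(A)\subset(-\infty,a]$ generates a bounded analytic semigroup with bounded $\mathrm{H}^\infty$-calculus, so that the equivalences collected in Figure~\ref{fig:weisspropselfadjoint} (due to Haak \cite{haakKontrolltheorieBanachraeumenUnd2005}) apply and reduce the $p$\kern+.06em{}-\kern-.15em{}Weiss property for control operators to a single square-function type estimate. First I would normalise the problem: after replacing $A$ by $A-\omega I$ for some $\omega>\max(a,0)$ one may assume $\sigma(A)\subset(-\infty,0)$, so that $-A$ is invertible with dense range and $T$ is exponentially stable; I would then argue that this shift does not affect the validity of the Weiss property. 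Reading off Figure~\ref{fig:weisspropselfadjoint} with the figure's free exponent set to $p'$, and using $A^*=A$, the $p$\kern+.06em{}-\kern-.15em{}Weiss property for control operators is \emph{equivalent} to $A$ having $\Ll^{p'}$-estimates, that is, to infinite-time $\Ll^{p'}$-admissibility of the observation operator $(-A)^{1/p'}$.

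It then remains to verify this last estimate directly, which is where the hypothesis $p\le 2$---equivalently $p'\ge 2$---enters. Writing $A=\int_{(-\infty,0)} s\,\ddd E(s)$ via the spectral theorem and substituting $\sigma=-s$, for $x\in\mathcal{D}(A)$ one obtains $\norm{(-A)^{1/p'}T(t)x}_X^2=\int_0^\infty \sigma^{2/p'}\e^{-2t\sigma}\,\ddd m_x(\sigma)$, where $m_x$ is the pushforward of the positive measure $\norm{E(\,\cdot\,)x}_X^2$ and has total mass $\norm{x}_X^2$ since $E((-\infty,0))=I$. The goal is the bound $\int_0^\infty \norm{(-A)^{1/p'}T(t)x}_X^{p'}\,\ddd t\lesssim\norm{x}_X^{p'}$. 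Since $p'/2\ge 1$, I would apply Minkowski's integral inequality to pull the $\Ll^{p'/2}_t$-norm inside the spectral integral; the inner norm evaluates to $\norm{\e^{-2(\,\cdot\,)\sigma}}_{\Ll^{p'/2}(0,\infty)}=(p'\sigma)^{-2/p'}$, and the weight $\sigma^{2/p'}$ cancels the resulting $\sigma^{-2/p'}$ exactly, leaving $\left(\int_0^\infty\norm{(-A)^{1/p'}T(t)x}_X^{p'}\,\ddd t\right)^{2/p'}\le (p')^{-2/p'}\norm{x}_X^2$. This yields the $\Ll^{p'}$-estimate for every $p'\ge 2$, and via the equivalence above the $p$\kern+.06em{}-\kern-.15em{}Weiss property follows.

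I expect the main obstacle to be bookkeeping rather than hard analysis: one must match the exponents in Figure~\ref{fig:weisspropselfadjoint} correctly---the control Weiss exponent is the Hölder conjugate of the exponent governing the $\Ll^{p'}$-estimates---and confirm that Haak's equivalences genuinely apply to self-adjoint, strictly negative $A$, in particular that the density-of-range and bounded-analyticity hypotheses hold and that the shift reduction is legitimate for the infinite-time notion. The direction of Minkowski's inequality is precisely what forces $p'\ge 2$; for $p'<2$ it would point the wrong way, consistent with the failure of the statement for $p>2$. As an alternative to the explicit computation, one could instead invoke the Haak--Kunstmann characterisation that $-A$ has $\Ll^{p'}$-estimates if and only if $X\hookrightarrow(\dot{X}_{-1},\dot{X}_1)_{1/2,p'}$, and note that $(\dot{X}_{-1},\dot{X}_1)_{1/2,2}=X$ on a Hilbert space together with the monotonicity of real interpolation in the second index yields the embedding exactly for $p'\ge 2$.
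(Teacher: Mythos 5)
Your proposal matches the paper's own proof in all essentials: both reduce the $p$\kern+.06em{}-\kern-.15em{}Weiss property for control operators, via Haak's equivalences (Figure~\ref{fig:weisspropselfadjoint}) and self-adjointness, to the $\Ll^{p'}$\kern-.17em{} estimate---that is, to $p'$\kern-.06em{}-admissibility of $(-A)^{1/p'}$ as an observation operator---and both verify that estimate by the identical spectral-theorem-plus-Minkowski computation, with $p'/2 \geq 1$ entering exactly where you say it does. The only minor differences are that the paper handles $p=2$ separately by citing Le~Merdy's result rather than running the computation uniformly, and that it leaves your shift normalization (and the boundedness of the semigroup) implicit.
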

 \begin{proof}
    The case $p=2$ is contained in \cite[Corollary~4.3]{lemerdyWeissConjectureBounded2003} as the negative semidefinite self-adjoint operator $A$ is the generator of a $\mathrm{C}_0$-semigroup
    of contractions on $X$.
    
    Thus we fix $p < 2$. The claim is that $B \in \LL(U,X_{-1})$ is infinite-time $\Ll^p$-admissible if and only if the quantity $\sup_{\lambda > 0} \lambda^{1/p} \norm{R(\lambda,A) B}_{\LL(U,X)}$
    is finite. To prove the claim we use results from Hosfeld, Jacob and Schwenninger \cite{hosfeldCharacterizationOrliczAdmissibility2023} in conjunction with results from Haak \cite{haakKontrolltheorieBanachraeumenUnd2005}.
    
    First we note that due to the assumption of being bounded from above, the self-adjoint $A$ generates a bounded analytic semigroup $(T(t))_{t\ge0}$, see \cite[Corollary~II.4.7]{engel_one-parameter_2000}. 
We observe that the spectral theorem for unbounded self-adjoint operators allows us to prove 
    $p'$\kern-.06em{}-admissibility of $(-A)^{1/p'}$ as an observation operator for $p' > 2$ more directly. Indeed, as $-A$ is self-adjoint, 
    there exists a spectral measure $E$ such that 
\[
    -A = \int_{\sigma(-A)} z \dd E(z),\qquad T(t) = \int_{\sigma(-A)} \e^{-tz} \dd E(z), \quad t\ge0,
\]
holds. In particular, there is a representation of the 
norm, i.e.
\begin{equation*}
    \int_0^\infty \norm{(-A)^{1/{p'}} T(t) x}_X^{p'} \dd t
    =
    \int_0^\infty \biggl( \int_{\sigma(-A)} \abs{z^{1/{p'}} \e^{-tz}}^2 \dd E_{x,x}(z) \biggr)^{{p'}/2} \dd t {;}\label{eq:normrepresentation}
\end{equation*}
 {cf.\ \cite[Theorem~4.7]{conwayCourseFunctionalAnalysis2007}.}
For $\sigma$-finite measure spaces $(X,\mu)$ and $(Y,\nu)$, a measurable function $f$ defined on $X \times Y$ and $1 \leq p < q < \infty$, Minkowski's integral inequality reads
\[
\int_Y \biggl( \int_X \abs{f(x,y)}^p \dd \mu(x) \biggr)^{q/p} \dd \nu(y) \leq
\biggl( \int_X \biggl( \int_Y \abs{f(x,y)}^q \dd \mu(y)\biggr)^{p/q} \dd \mu(x) \biggr)^{q/p},
\]
see e.g.\ \cite[Exercise~3.10.46]{bogachevMeasureTheory2007} or \cite[Proposition~1.2.22]{hytonenAnalysisBanachSpaces2016}.
Applying Minkowski's inequality in \eqref{eq:normrepresentation}
above yields
\begin{align*}
    \int_0^\infty \biggl(\int_{\sigma(-A)} \abs{z^{1/{p'}} \e^{-tz}}^2 \dd E_{x,x}(z) \biggr)^{{p'}\!/2} \dd t &\leq \biggl( 
    \int_{\sigma(-A)} \biggl( \int_0^\infty z \e^{-tz{p'}} \dd t\biggr)^{2/{p'}} \dd 
    E_{x,x}(z)\biggr)^{{p'}\!/2}\\
    &= \Bigl(\frac{1}{{p'}^{2/{p'}}}
    \int_{\sigma(-A)} \dd E_{x,x}(z)\Bigr)^{{p'}\!/2}\\
    &= \frac{1}{p'} \norm{x}_X^{p'}.
\end{align*}
Therefore $(-A)^{1/p'}$ is $p'$\kern-.06em{}-admissible as an observation operator, which is equivalent to requiring that the operator $-A$ satisfies $\Ll^{p'}$\kern-.17em{} estimates in the sense of Haak, \cite[page~29]{haakKontrolltheorieBanachraeumenUnd2005}.

    Because $A$ was required to be self-adjoint,
    the same estimates hold for $(-A)^*$.
    Hence we can apply the generalization of \cite[Satz~2.1.6]{haakKontrolltheorieBanachraeumenUnd2005} given in \cite[page~29]{haakKontrolltheorieBanachraeumenUnd2005}, which states that the 
    $p$\kern+.06em{}-\kern-.15em{}Weiss property for control operators is valid for $A$. This ends the proof.
    \end{proof}
 \begin{remark}
     The last proposition leads us to the observation that there is a noticeable
     difference between the cases $p < 2$ and $p > 2$ in checking $p$\kern+.06em{}-admissibility.
     In the control operator setting with a system on a Hilbert space $X$, the case $p < 2$ is simpler than $p > 2$. By the previous argument, once we transfer the reasoning from observation operators to control operators---$A$ was self-adjoint---we would need to require $p'\!/2 \geq 1$ for the Minkowski inequality to hold, which means that we would need
     $p < 2$.

     The complementary case does not allow for such arguments. Below we give an example of a self-adjoint and negative operator that does not have the $4$-\kern-.15em{}Weiss property.  We note in passing that in the case $p = \infty$, the property of $A^{*}$ being an $\Ll^{1}$-admissible observation operator is equivalent to $A$ being bounded on $X$, see also \cite{jacobRefinementBaillonTheorem2022}.
 \end{remark}
 \begin{example}
     By means of the example of the one-dimensional heat equation from Section~\ref{sec:dirichletheateq} above, we can show that a negative definite self-adjoint generator need not possess the $p$\kern+.06em{}-\kern-.15em{}Weiss property
     for $p > 2$.
     
     As a continuation of our earlier computations, we prove that the resolvent condition is valid for $p = 4$. Since the control operator of system in question was not $4$-admissible, this implies that the generator does not have the $4$-\kern-.15em{}Weiss property.

     Recall that the Dirichlet Laplacian $A$ on $\Ll^2([0,1])$ has the diagonal representation
     \[
        Ae_n = - n^2 \pi^2 e_n, \qquad n\in \N,
     \]
     where $e_n$ are the unit vectors in $\ell^2(\N)$. Moreover, the control operator $b$ modeling the heat injection at the right endpoint of the interval was described by the sequence $b = \{ b_n \}_{n\in\N}$, where $b_n = (-1)^n \sqrt{2}n\pi$.

     The composition $R(\lambda,A)b$ of the resolvent and control operator is then 
     given by the sequence
     \[
        (R(\lambda,A) b)_n = \frac{(-1)^n \sqrt{2} n \pi}{\lambda + n^2\pi^2},\qquad n\in\N.
     \]
     Validity of the  resolvent condition in the $4$-\kern-.15em{}Weiss property is (by definition) equivalent to uniform boundedness
     of the quantity $(\Re \lambda)^{1/4} \norm{R(\lambda,A)b}_{\LL(U,X)}$ among complex numbers $\lambda$ with positive real part.
     
     In view of $U = \C$ and $X = \ell^2(\N)$,  we insert the expressions for the coefficients of $R(\lambda,A)b$; it then remains to show that
     \begin{equation}
        \sup_{\Re \lambda > 0} (\Re \lambda)^{1/4} \Biggl(\sum_{n=1}^\infty \frac{2 n^2 \pi^2}{\abs{\lambda + n^2\pi^2}^2} \Biggr)^{1/2} < \infty.\label{eq:4Weiss}
     \end{equation}
     Note that we can restrict to real $\lambda$. Moreover, it is convenient to consider $\mu = \sqrt{\lambda}$ in place of $\lambda$. Employing this substitution, after squaring the expression \eqref{eq:4Weiss} becomes
     \begin{equation*}
        \sup_{\mu > 0} \mu \sum_{n=1}^\infty \frac{2 n^2 \pi^2}{(\mu^2 + n^2\pi^2)^2} < \infty.
     \end{equation*}
     A closed form of the sum $\sum_{n=1}^\infty \frac{\pi^2 n^2}{(b +  \pi^2 n^2)^2}$
	exists in terms of hyperbolic functions. 
	Indeed, using  {\[\coth x \coloneq \frac{\cosh x}{\sinh x} = \frac{\e^x + \e^{-x}}{\e^x - \e^{-x}},\quad \csc x \coloneq \frac{1}{\sin x},\quad  \csch x \coloneq \frac{1}{\sinh x} = \frac{2}{\e^x - \e^{-x}}\] together with the series expansions}
    \begin{equation*}
			\coth(\pi x) = \frac{1}{\pi x} + \frac{2 x}{\pi} \sum_{k = 1}^\infty
			\frac{1}{k^2 + x^2},\qquad
			\csc^2(\pi x) = \frac{1}{\pi^2} 
			\sum_{k=-\infty}^\infty \frac{1}{(x - k)^2},
		\end{equation*}
	see \cite[page 36]{gradshtein_table_1980}, and the identity $\csch^2 (x)= -\kern-.1em{}\csc^2(\I x)$ for $x \in \R$, one can show that
     \[
        \mu \sum_{n=1}^\infty \frac{2 n^2 \pi^2}{(\mu^2 + n^2\pi^2)^2} = \frac 12 (\coth \mu - \mu \csch^2 \mu) = 
        \frac 12 + \frac{(2 - 4\mu) \e^{2\mu} - 2}{2(\e^{2\mu} - 1)^2}.
     \]
     As $\frac{(2 - 4\mu) \e^{2\mu} - 2}{2(\e^{2\mu} - 1)^2} < 0$, we have
     \begin{equation*}
        \sup_{\mu > 0} \mu \sum_{n=1}^\infty \frac{2 n^2 \pi^2}{(\mu^2 + n^2\pi^2)^2} = \sup_{\mu > 0} \biggl(\frac 12 + \frac{(2 - 4\mu) \e^{2\mu} - 2}{2(\e^{2\mu} - 1)^2}\biggr)
        \leq \frac 12;
     \end{equation*}
     validating the resolvent condition. 
     Therefore the $4$-\kern-.15em{}Weiss property is violated for $A$.
 \end{example}
 The above example shows that Theorem~\ref{prop:uniformweiss} cannot be applied to show $p$\kern+.06em{}-admissibility for systems arising from boundary control with infinite-dimensional input space if $p>2$, even if $A$ is self-adjoint. Consequently, determining admissibility with respect to sharp values of $p$ remains difficult in this regime. Practically, the straight-forward tool to derive admissibility seems to be trace estimates in conjunction with the interpolation spaces as lined out in Section~\ref{sec:interpolation}.
\emergencystretch=1em
\printbibliography
\end{document}